\documentclass[a4paper,11pt,reqno]{amsart}

\usepackage[utf8]{inputenc}
\usepackage{tikz}
\usetikzlibrary{arrows}

\usepackage{amssymb,upref}
\usepackage[mathcal]{euscript}
\usepackage[cmtip,all]{xy}


\newcommand{\bydef}{:=}

\newcommand{\cA}{\mathcal{A}}
\newcommand{\cE}{{\mathcal E}}
\newcommand{\cI}{\mathcal{I}}
\newcommand{\cJ}{\mathcal{J}}

\newcommand{\cS}{{\mathcal S}}
\newcommand{\cT}{{\mathcal T}}
\newcommand{\cU}{{\mathcal U}}

\newcommand{\espan}[1]{\mathrm{span}\left\{#1\right\}}

\newcommand{\NN}{{\mathbb N}}
\newcommand{\FF}{\mathbb{F}}
\newcommand{\RR}{\mathbb{R}}
\newcommand{\id}{\mathrm{id}}


\DeclareMathOperator{\ann}{ann}

\newtheorem{theorem}{Theorem}[section]
\newtheorem{proposition}[theorem]{Proposition}
\newtheorem{lemma}[theorem]{Lemma}
\newtheorem{corollary}[theorem]{Corollary}

\theoremstyle{definition}
\newtheorem{df}[theorem]{Definition}

\numberwithin{equation}{section}

\theoremstyle{remark}
\newtheorem{remark}[theorem]{Remark}

\newenvironment{romanenumerate}
{\begin{enumerate}
 
 }{\end{enumerate}}

\usepackage{tikz}
\usetikzlibrary{arrows}


\begin{document}

\title{On nilpotent evolution  algebras }

\author[Alberto Elduque]{Alberto Elduque$^{\star}$}
\thanks{$^{\star}$ Supported by the Spanish Ministerio de Econom{\'\i}a y Competitividad and 
Fondo Europeo de Desarrollo Regional (FEDER) MTM 2013-45588-C3-2-P, 
and by the Diputaci\'on General de Arag\'on -- Fondo Social Europeo (Grupo de Investigaci\'on de \'Algebra).
Part of this research was carried out while this author was visiting the Departamento de Matem\'aticas, Facultad de Ciencias, Universidad de
Chile, supported by the FONDECYT grant 1120844.}
\address{Departamento de Matem\'aticas e Instituto Universitario de Matem\'aticas y Aplicaciones,
Universidad de Zaragoza, 50009 Zaragoza, Spain
}
\email{elduque@unizar.es}

\author[Alicia Labra]{Alicia Labra$^{\star\star}$}
\thanks{$^{\star\star}$ Supported by FONDECYT 1120844.}
\address{Departamento de Matem\'aticas,
Facultad de Ciencias, Universidad de Chile.  Casilla 653, Santiago, Chile}
\email{alimat@uchile.cl}


\subjclass[2010]{Primary 17A60, 17D92}

\keywords{Evolution algebra; nilpotent; type; annihilator; classification}

\begin{abstract}

The type and several invariant subspaces related to the upper annihilating series of finite-dimensional 
nilpotent evolution algebras are introduced. These invariants can be easily computed from any natural basis.
Some families of nilpotent evolution algebras, defined in terms of a nondegenerate, symmetric, bilinear form 
and some commuting, symmetric, diagonalizable endomorphisms relative to the form, are explicitly constructed. 
Both the invariants and these families are used to review and complete the classification of nilpotent 
evolution algebras up to dimension five over algebraically closed fields.
\end{abstract}

\maketitle


\section{Introduction}

Evolution algebras were introduced in 2006 by Tian and Vojtechovsky, in their paper ``Mathematical
concepts of evolution algebras in non-Mendelian genetics " (see \cite{TV}).
Later on, Tian laid the foundations of evolution algebras in his monograph
\cite{T}. 

In some recent papers \cite{HA1,HA2}, a classification of the nilpotent evolution algebras up to dimension 
five has been given. However, there is a subtle point which has not been considered. When dealing with an extension of a 
nilpotent evolution algebra by a trivial ideal, one cannot fix a natural basis of the initial algebra, because
a natural basis of a quotient does not necessarily extend to a natural basis of the whole algebra. As a consequence, the
classifications in these papers are not complete. This also shows how tricky these algebras are.

The goal of this paper is the introduction of some new techniques for the study of evolution algebras, as well as the 
construction of several noteworthy families of nilpotent evolution algebras defined in terms of bilinear forms and symmetric 
endomorphisms. Using these tools, the classification of the nilpotent evolution algebras up to dimension five, over an algebraically
closed field of characteristic not two, is obtained without much effort, although the number of possibilities in dimension
five is quite high and indicates the difficulty of this problem for higher dimension.

\smallskip

Let us first recall the basic definitions.

An  \emph{evolution algebra} is an algebra $\cE$ containing a countable basis (as a vector space) 
$B=\{e_1, \ldots, e_n, \ldots,\}$ such that $e_ie_j=0$ for any $1\leq i\ne j\leq n$. 
A basis with this property is called a \emph{natural basis}. By its own definition, 
any evolution algebra is commutative.
In this paper we deal with finite dimensional evolution algebras.
Given a natural basis $B=\{e_1,\ldots,e_n\}$ of an evolution algebra $\cE$,
$e_i^2=\sum_{j=1}^n\alpha_{ij}e_j$ 
for some scalars $\alpha_{ij}\in \FF$, $1\leq i,j\leq n$. The matrix $A=\bigl(\alpha_{ij}\bigr)$ is 
the \emph{matrix of structural constants} of the evolution algebra $\cE$, relative to the natural basis $B$.

We recall next the definition of the graph and weighted graph attached to an evolution algebra (see 
\cite[Definition 2.2]{EL}). Our 
graphs are always directed graphs, and most of our algebras will be presented by means of their graphs.

Let $\cE$ be an evolution algebra with a natural basis $B=\{e_1,\ldots,e_n\}$ and matrix of 
structural constants $A=\bigl(\alpha_{ij}\bigr)$.
\begin{itemize}
\item The graph $\Gamma(\cE,B)=(V,E)$, with $V=\{1,\ldots,n\}$ and $E=\{(i,j)\in V\times V: \alpha_{ij}\ne 0\}$, is called the \emph{graph attached to the evolution algebra $\cE$ relative to the natural basis $B$}.
\item The triple $\Gamma^w(\cE,B)=(V,E,\omega)$, with $\Gamma(\cE,B)=(V,E)$ and where $\omega$ is 
the map $E\rightarrow \FF$ given by $\omega\bigl((i,j)\bigr)=\alpha_{ij}$, is called the 
\emph{weighted graph attached to the evolution algebra $\cE$ relative to the natural basis $B$}.
\end{itemize}

\smallskip

The paper is organized as follows.
 In Section \ref{se:Krull-Schmidt theorem_and_indescomposability} we prove a general Krull-Schmidt Theorem 
for nonassociative algebras, which has its own independent interest. It shows that it is enough
to classify indecomposable algebras. Also, using the annihilator of an algebra, we give some results which are useful to check the decomposability of a finite-dimensional algebra. 
See Lemma \ref{lem:subalgebraann}, Corollaries \ref{cor:algebracuadann} and \ref{cor:dimensionann}.
 
In Section \ref{se: upperseries} we define the upper annihilating series of an arbitrary nonassociative algebra,
and then the type of a finite-dimensional nilpotent algebra. This allows us later on to split the classification of 
nilpotent evolution algebras according to their types.
 
 In Section \ref{se: somefamilies-nilpotent_evol} we study some families of nilpotent evolution algebras, defined in terms of a 
 nondegenerate, symmetric, bilinear form and some commuting, symmetric, diagonalizable endomorphisms relative to the bilinear
 form.

Section \ref{se:classificationdimfour} is devoted to the classification of the indecomposable nilpotent evolution algebras
of dimension up to four, over an algebraically closed field of characteristic not two. All such algebras lie in one of the families 
studied in Section \ref{se: somefamilies-nilpotent_evol}, so this classification is done very quickly. Our list
includes two algebras not considered in \cite{HA1}.

Finally, in Section \ref{se:classificationdimfive}
 we classify all the indecomposable nilpotent evolution algebras of dimension five,  
 over an algebraically closed field of characteristic not two. About half of the algebras in this classification belong
 to one of the families in Section \ref{se: somefamilies-nilpotent_evol}. For the remaining algebras, some ad hoc arguments
 are needed. The results in \cite{HA2} miss most of the algebras in our classification.
 

\medskip

\section{A Krull-Schmidt Theorem for nonassociative algebras}\label{se:Krull-Schmidt theorem_and_indescomposability}

Given a nonassociative (i.e. not necessarily associative) algebra $\cA$ over a field $\FF$, its multiplication algebra $M(\cA)$ is the subalgebra of 
End$_{\FF}(\cA)$ generated by the left and right multiplications by elements in $\cA$:
$$ 
M(\cA)\bydef \; \mbox{alg} \langle L_x, R_x : x \in \cA \rangle, 
$$
where $L_x: y \mapsto xy$, $R_x: y \mapsto yx$.

By its own definition,  $\cA$ is a left module for the associative algebra $M(\cA)$ and the ideals of $\cA$ are precisely the submodules of $\cA$ as an $M(\cA)$-module.

\begin{df}
An algebra $\cA$ is said to be \emph{indescomposable} (resp. \emph{descomposable}) if it is so as an  
$M(\cA)$-module. 
That is, $\cA$ is descomposable  if there are nonzero ideals
 $\cI$ and  $\cJ$ such that $\cA = \cI \oplus \cJ$. Otherwise, it is indecomposable.
\end{df}

 In \cite{CSV} the word (ir)reducible is used instead of (in)descomposable. For evolution algebras, indescomposability is related to connectedness (see \cite[Proposition  2.8]{EL}).

\begin{theorem}\label{teo:Krull-Schmidt Theorem}
 Let $\cA$ be an algebra which is a module of finite length for $M(\cA)$ (this is always the case if  $\cA$ is finite-dimensional). Then $\cA$ decomposes as a finite direct sum of indescomposable ideals. 
 
 Moreover, if $\cA = \cI_1 \oplus \cdots \oplus \cI_n = \cJ_1 \oplus \cdots \oplus \cJ_m $ with $\cI_i, \cJ_j$ indecomposable for  all $ i = 1, \ldots, n$ and
  $ j = 1, \ldots, m$, then $ n = m$ and there is a permutation $\sigma \in S_n$ such that $\cI_i$ is isomorphic (as an algebra) to $\cJ_{\sigma(i)}$ for all $ i = 1, \ldots, n$. 
\end{theorem}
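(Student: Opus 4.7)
The plan is to view $\cA$ as a module over its multiplication algebra $M(\cA)$, reducing the problem to the classical Krull--Schmidt theorem for modules of finite length, and then upgrade the resulting module isomorphisms to algebra isomorphisms by realizing each summand as a quotient algebra of $\cA$.

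Existence is a routine induction on the $M(\cA)$-module length $\ell(\cA)$. If $\cA$ is indecomposable there is nothing to do; otherwise $\cA=\cI\oplus\cJ$ with both summands nonzero ideals of strictly smaller length, and the inductive hypothesis applies to each. Since $\cI\cJ=0=\cJ\cI$, any ideal of $\cI$ (resp.\ of $\cJ$) is automatically an ideal of $\cA$, so the finer decomposition obtained by induction has the required form.

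For uniqueness I would first invoke Fitting's lemma: an indecomposable module of finite length has local endomorphism ring, so the Azumaya--Krull--Schmidt exchange property is available for the $M(\cA)$-module $\cA$. Applied to the two decompositions into indecomposable ideals, it yields an index $j$ with
\begin{equation*}
\cA \;=\; \cJ_j \oplus \cI_2 \oplus \cdots \oplus \cI_n.
\end{equation*}
The crux of the argument is that $\cK\bydef\cI_2\oplus\cdots\oplus\cI_n$ is an ideal of $\cA$, and both $\cI_1$ and $\cJ_j$ are ideals of $\cA$ complementary to $\cK$; hence the quotient map $\cA\to\cA/\cK$ is an algebra homomorphism whose restriction to each of $\cI_1$ and $\cJ_j$ is an algebra isomorphism (injective because the intersection with $\cK$ is zero, surjective because the sum with $\cK$ is $\cA$). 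Composing the two isomorphisms yields an algebra isomorphism $\cI_1\cong\cJ_j$.

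To finish, I would induct on $n$: the quotient algebra $\cA/\cJ_j$ has strictly smaller length and inherits two decompositions into $n-1$ and $m-1$ indecomposable ideals, namely the images of $\cI_2,\ldots,\cI_n$ and of the $\cJ_k$ for $k\neq j$. The inductive hypothesis then gives $n-1=m-1$ together with an algebra bijection between the remaining summands, which combined with the isomorphism $\cI_1\cong\cJ_j$ produced above delivers the required permutation $\sigma$. The main obstacle throughout is precisely the fact that an $M(\cA)$-linear bijection is not a priori multiplicative; this is bypassed by never using a bare module isomorphism and instead always recovering each summand, up to algebra isomorphism, as the quotient of $\cA$ by the ideal generated by its complementary summands.
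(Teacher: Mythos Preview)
Your proof is correct and follows essentially the same strategy as the paper: reduce to the module-theoretic Krull--Schmidt theorem for the $M(\cA)$-module $\cA$, and then upgrade the resulting correspondence to algebra isomorphisms by observing that two ideals complementary to a common ideal $\cK$ are both isomorphic, as algebras, to the quotient $\cA/\cK$. The only organizational difference is that the paper cites the strong form of Krull--Schmidt from Jacobson, which gives directly that after reordering $\cA=\cI_1\oplus\cdots\oplus\cI_k\oplus\cJ_{k+1}\oplus\cdots\oplus\cJ_n$ for every $k$, so that each pair $\cI_k,\cJ_k$ shares the common complement $\cI_1\oplus\cdots\oplus\cI_{k-1}\oplus\cJ_{k+1}\oplus\cdots\oplus\cJ_n$ and no induction on $n$ is needed; you instead perform a single exchange and pass to the quotient $\cA/\cJ_j$, which amounts to re-deriving that stronger statement inductively.
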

\begin{proof}
The version of the classical  Krull-Schmidt Theorem for modules proved in (\cite[Chapter V  \S 13]{J}) shows that $\cA$ is a finite direct sum of indescomposable ideals and that if $\cA = \cI_1 \oplus \cdots \oplus \cI_n = \cJ_1 \oplus \cdots \oplus \cJ_m $ with the  $\cI_i's$ and $ \cJ_j's$ indescomposable ideals, then $ n = m$ and, after a suitable reordering of the ideals, $\cA = \cI_1 \oplus \cdots \oplus \cI_k \oplus \cJ_{k+1}\oplus \cdots \oplus \cJ_n  $  
for all $ k = 0, \ldots, n$.

Then, for any  $k = 1, \ldots, n$, we have both $\cA = \cI_1 \oplus \cdots \oplus \cI_k \oplus \cJ_{k+1}\oplus \cdots \oplus \cJ_n  $ and $\cA = \cI_1 \oplus \cdots \oplus \cI_{k-1}\oplus \cJ_k \oplus \cJ_{k+1}\oplus \cdots \oplus \cJ_n, $ so both $\cI_k$ and $\cJ_k$ are isomorphic to the quotient 
$ \cA / (\cI_1 \oplus \cdots \oplus \cI_{k-1}\oplus \cJ_{k+1}\oplus \cdots \oplus \cJ_n)$. 
\end{proof}

\begin{remark}
In the proof above, from the fact that $\cI_k$ and $\cJ_k$ are isomorphic as $M(\cA)$-modules, it does not follow that they are isomorphic as algebras. 
Hence the explicit version of the  classical  Krull-Schmidt Theorem used here is essential.

For Bernstein algebras, a similar argument appears in \cite{CM}.
\end{remark}

The previous result shows that it is enough to classify indecomposable algebras. One has to take into account that any quotient, and hence any direct summand (as ideals), of an evolution algebra is itself an evolution algebra \cite[Lemma 2.9]{EL}.

\medskip
We finish this section with some useful tricks to check  descomposability. Recall 
that the annihilator  of an algebra $\cA$ is 
 $\ann(\cA)\bydef\{x\in\cA: x\cA=\cA x=0\}$.
 Any subspace of $\ann(\cA)$ is an ideal of $\cA$.

 \begin{lemma} \label{lem:subalgebraann}
  Let $\cA$ be an algebra over a field $\FF$, $\dim_{\FF}(\cA) > 1$. If $\cS$ is a proper subalgebra of $\cA$ such that $\cA = \cS + \ann(\cA)$, then $\cA$ is decomposable.
 \end{lemma}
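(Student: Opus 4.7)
The plan is to find an ideal decomposition by exploiting the fact that any subspace of $\ann(\cA)$ is automatically an ideal, and that $\cS$ itself becomes an ideal under the hypothesis.

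First, I would pick a vector-space complement $\cU$ of $\cS \cap \ann(\cA)$ inside $\ann(\cA)$. Since $\cA = \cS + \ann(\cA)$, this gives the vector-space decomposition $\cA = \cS \oplus \cU$. The subspace $\cU$ lies inside $\ann(\cA)$, so it is automatically an ideal of $\cA$.

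The key step is to show that $\cS$ is also an ideal. Because $\cS$ is a subalgebra and every element of $\cA$ is a sum of an element in $\cS$ and one in $\ann(\cA)$, one computes
\[
\cS\cA \subseteq \cS\cdot\cS + \cS\cdot\ann(\cA) \subseteq \cS + 0 = \cS,
\]
and symmetrically $\cA\cS \subseteq \cS$. So $\cA = \cS \oplus \cU$ is a decomposition as ideals.

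It remains to ensure both summands are nonzero. Since $\cS$ is proper, $\cU$ cannot be zero (otherwise $\cA = \cS$). If $\cS \neq 0$ we are done. The only remaining case is $\cS = 0$, in which case $\cA = \ann(\cA)$ has trivial multiplication; then every subspace is an ideal, and because $\dim_{\FF}(\cA) > 1$ we may split $\cA$ as a direct sum of two nonzero subspaces, each automatically an ideal. In either case $\cA$ is decomposable. The only mild subtlety, and hence the one point deserving explicit attention, is remembering to treat the degenerate case $\cS = 0$ separately, since otherwise the argument would produce a trivial summand.
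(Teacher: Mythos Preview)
Your proof is correct and follows essentially the same approach as the paper: show that $\cS$ is an ideal (the paper does this via the observation $\cA^2=\cS^2\subseteq\cS$, you via the equivalent direct computation $\cS\cA\subseteq\cS\cS+\cS\,\ann(\cA)\subseteq\cS$), then take a complement of $\cS$ inside $\ann(\cA)$ to get the second ideal summand. Your explicit treatment of the degenerate case $\cS=0$ is a small point the paper's proof leaves implicit.
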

 \begin{proof}
 If $\cA = \cS + \ann(\cA)$, then $\cA^2 = \cS^2 \subseteq \cS$, so $\cS$ is an ideal of $\cA$. Let $\cT$ be a subspace of $\ann(\cA)$ 
 such that $\cA =\cS \oplus \cT$. Then
 both $\cS$ and $\cT$ are ideals of $\cA$, so $\cA$ is decomposable.
 \end{proof}

 \begin{corollary}\label{cor:algebracuadann}
  Let $\cA$ be an algebra, $\dim_{\FF}(\cA) > 1$. If $\ann(\cA)$ is not contained in $\cA^2,  $ then $\cA$ is decomposable.
 \end{corollary}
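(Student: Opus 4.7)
My plan is to reduce the corollary directly to Lemma \ref{lem:subalgebraann} by exhibiting a proper subalgebra $\cS$ of $\cA$ such that $\cA = \cS + \ann(\cA)$.

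Since $\ann(\cA) \not\subseteq \cA^2$, I can pick an element $z \in \ann(\cA)$ with $z \notin \cA^2$, in particular $z \neq 0$. The key observation is that $\cA^2$ is automatically closed under multiplication (it is in fact an ideal of $\cA$), and that any subspace of $\cA$ which contains $\cA^2$ is automatically a subalgebra of $\cA$, since its product with itself lands in $\cA^2$. So I would complete $\cA^2$ to a hyperplane $\cS$ of $\cA$ with $z \notin \cS$: concretely, starting from the subspace $\cA^2$, which does not contain $z$, extend it to a vector space complement $\cS$ of $\FF z$ in $\cA$, so that $\cA = \cS \oplus \FF z$.

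By construction $\cS$ is a proper subspace of $\cA$ containing $\cA^2$, hence a proper subalgebra, and since $z \in \ann(\cA)$ we have $\cA = \cS + \FF z \subseteq \cS + \ann(\cA)$. Lemma \ref{lem:subalgebraann} (whose hypothesis $\dim_\FF \cA > 1$ is exactly the one assumed here) then yields the decomposability of $\cA$.

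There is no real obstacle to this argument; the only point worth checking carefully is that the subalgebra produced is indeed proper, which is guaranteed by the choice $z \in \ann(\cA) \setminus \cA^2$ ensuring $z \notin \cS$. The hypothesis that $\ann(\cA)$ is \emph{not} contained in $\cA^2$ is used solely to supply such a $z$, which is exactly what makes the hyperplane $\cS$ a genuine proper subalgebra rather than all of $\cA$.
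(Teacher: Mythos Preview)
Your argument is correct and essentially identical to the paper's: both produce a proper subspace $\cS\supseteq\cA^2$ with $\cA=\cS+\ann(\cA)$ and then invoke Lemma~\ref{lem:subalgebraann}. The only cosmetic difference is that you single out a specific $z\in\ann(\cA)\setminus\cA^2$ and take $\cS$ to be a hyperplane complement to $\FF z$, whereas the paper phrases the same step more abstractly via the strict inclusion $\cA^2\subsetneq\cA^2+\ann(\cA)$.
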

 \begin{proof}
 We have that $\cA^2 \subsetneq \ann(\cA) + \cA^2$, so there is a proper subspace $\cS$ such that $\cA^2 \subseteq \cS$ and 
 $ \cA = \cS + \ann(\cA)$. Then $ \cS$ is an ideal of $\cA $ and the Lemma applies.
 \end{proof}

 \begin{corollary}\label{cor:dimensionann}
  Let $\cE$ be a finite-dimensional evolution algebra such that $\dim_{\FF}(\ann(\cE)) \geq \frac{1}{2} \dim_{\FF}(\cE)\geq 1$. Then $\cE$ is decomposable.
 \end{corollary}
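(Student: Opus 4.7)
The plan is to split into two cases based on whether $\ann(\cE) \subseteq \cE^2$, and in the harder case to extract an explicit direct-sum decomposition from the structural constants after a mild basis change.

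First I would apply Corollary \ref{cor:algebracuadann} to dispose of the case $\ann(\cE) \not\subseteq \cE^2$, which immediately yields decomposability. Henceforth assume $\ann(\cE) \subseteq \cE^2$, set $n = \dim_{\FF}\cE$ and $d = \dim_{\FF}\ann(\cE) \geq n/2$, and fix a natural basis $B = \{e_1, \ldots, e_n\}$. The condition $e_i e_j = 0$ for $i \neq j$ gives $\bigl(\sum_i c_i e_i\bigr) e_k = c_k e_k^2$, so an element of $\cE$ lies in $\ann(\cE)$ precisely when its coefficient on $e_k$ vanishes whenever $e_k^2 \neq 0$. Setting $I = \{j : e_j^2 = 0\}$ and $J = \{1, \ldots, n\} \setminus I$, this reads $\ann(\cE) = \espan{e_j : j \in I}$, so $|I| = d$, while $\cE^2 = \espan{e_j^2 : j \in J}$ forces $\dim_{\FF}\cE^2 \leq |J| = n - d$.

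Combining the hypothesis $d \geq n/2$ with $d \leq \dim_{\FF}\cE^2 \leq n - d$ forces equality throughout: $n = 2m$ is even, $|I| = |J| = d = m$, and $\{e_j^2 : j \in J\}$ is a basis of $\ann(\cE) = \cE^2$. Since this produces two bases of $\espan{e_i : i \in I}$, I would fix any bijection $\sigma \colon J \to I$ and replace the given basis by $f_j := e_j$ for $j \in J$ together with $f_{\sigma(j)} := e_j^2$ for $j \in J$. One checks that $\{f_1, \ldots, f_n\}$ is again a natural basis: products $f_j f_{j'}$ with distinct $j, j' \in J$ vanish because $e_j e_{j'} = 0$, every product involving some $f_i$ with $i \in I$ vanishes because those $f_i$ lie in $\ann(\cE)$, and $f_j^2 = e_j^2 = f_{\sigma(j)}$ for $j \in J$.

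With the algebra in this normal form, each two-dimensional subspace $\cI_j := \espan{f_j, f_{\sigma(j)}}$ is an ideal (since $f_j^2 = f_{\sigma(j)} \in \cI_j$, $f_{\sigma(j)}^2 = 0$, and $\cI_j \cdot \cI_{j'} = 0$ for $j \neq j'$), so that $\cE = \bigoplus_{j \in J} \cI_j$ gives a decomposition into nonzero ideals. The main point I expect to require care is the basis-change step: one needs to confirm that replacing the $e_i$ with $i \in I$ by the new family $\{e_j^2 : j \in J\} \subset \ann(\cE)$ preserves the natural-basis property, which relies crucially on the fact that all the new vectors lie in the annihilator and therefore multiply trivially against every element of $\cE$.
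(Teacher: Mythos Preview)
Your proof is correct and follows essentially the same approach as the paper's: both invoke Corollary~\ref{cor:algebracuadann} when $\ann(\cE)\not\subseteq\cE^2$, and otherwise deduce $n=2d$, $\ann(\cE)=\cE^2$, replace the annihilator part of the natural basis by $\{e_j^2:j\in J\}$, and split $\cE$ into the two-dimensional ideals $\espan{e_j,e_j^2}$. The only cosmetic difference is the order of the case split (the paper first separates $n<2r$ from $n=2r$), but the substance is identical.
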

 \begin{proof} 
 Let $B=\{e_1, \ldots, e_n,\}$  be a natural basis, ordered so that $e_1^2 = \ldots = e_r^2 = 0 $ and $e_{r+1}^2, \ldots, e_n^2 \neq 0. $
 Then $\ann(\cE)=\espan{e_1,\ldots,e_r }$ (\cite[Lemma 2.7]{EL}) and $\cE^2=\espan{e_{r+1}^2,\ldots,e_n^2}$. 
 Our hypotheses show that $n \leq 2r$. If $n<2r$, then
$\dim_{\FF}(\ann(\cE)) = r > n-r \geq \dim_{\FF}(\cE^2)$. Hence $\ann(\cE) \nsubseteq \cE^2$ and the previous Corollary applies. If $n=2r$ and $\ann(\cE)\nsubseteq \cE^2$, again Corollary \ref{cor:algebracuadann} applies. Finally, if $n=2r$ and $\ann(\cE)\subseteq \cE^2$, then $\cE^2=\espan{e_{r+1}^2,\ldots,e_n^2}$ equals $\ann(\cE)$ by dimension count, so the family $\{e_{r+1}^2,\ldots,e_n^2,e_{r+1},\ldots,e_n\}$ is another natural basis, and $\cE$ is the direct sum of the ideals $\cI_i=\espan{e_{r+i}^2,e_{r+i}}$, $i=1,\ldots,r$.
 \end{proof}

\medskip

\section{Upper annihilating series}\label{se: upperseries}

Given a nonassociative algebra $\cA$, we introduce the following sequences of subspaces:
\begin{align*}
\cA^{<1>} & = \cA,&  \cA^{<k+1>} & = \cA^{<k>}\cA;\\[-4pt]
\cA^1 & = \cA, & \cA^{k+1} &=\sum_{i=1}^{k}\cA^i\cA^{k+1-i}.
\end{align*}

\begin{df} An algebra $\cA$ is called
\begin{romanenumerate}
\item \emph{right nilpotent} if there exists $n\in \NN$ such that $\cA^{<n>} = 0$, and the minimal 
such number is called the \emph{index of right nilpotency};
\item \emph{nilpotent} if there exists $n\in \NN$ such that $\cA^n = 0$, and the minimal such number 
is called the \emph{index of nilpotency}.
\end{romanenumerate}
\end{df}

\begin{remark}
A commutative algebra is right nilpotent if and only if it is nilpotent (see \cite[Chapter 4, Proposition 1]{ZSSS}). This applies, in particular, to evolution algebras.
\end{remark}

\begin{df}
Let $\cA$ be an algebra. 
Consider the chain of 
ideals $\ann^i(\cA)$, $i\geq 1$, where:
\begin{itemize}
\item
 $\ann^1(\cA)\bydef\ann(\cA)\bydef\{x\in\cA: x\cA=\cA x=0\}$,
\item
$ \ann^i(\cA)$ is defined by $\ann^i(\cA) /\ann^{i-1}(\cA)\bydef\ann(\cA /\ann^{i-1}(\cA))$.
\end{itemize}
The chain of ideals:
\[
0=\ann^0(\cA)\subseteq \ann^1(\cA)\subseteq \cdots\subseteq \ann^r(\cA)\subseteq \cdots
\] 
is called the  \emph{the upper annihilating series}.
\end{df}

As for Lie algebras, a nonassociative algebra $\cA$ is nilpotent if and only if its upper annihilating series reaches $\cA$. 
That is, if there exists $r$ such that $\ann^r(\cA)=\cA$.

\medskip
\begin{df}
Let $\cA$ be a finite-dimensional nilpotent nonassociative algebra over a field $\FF$, and let  $r$ be the lowest natural number
with $\ann^r(\cA)=\cA$. The \emph{type} of $\cA$ is the sequence 
$[n_1, \ldots, n_r]$ such that for all $i = 1,\ldots, r$, $n_1 + \cdots + n_i = \dim_\FF(\ann^i(\cE))$. 
In other words,
\[
n_i =\dim_\FF(\ann(\cE/ann^{i-1}(\cE)))=\dim_\FF(\ann^i(\cE))-\dim_\FF(\ann^{i-1}(\cE)),
\] 
for all $i = 1,\ldots, r $.
\end{df}

\medskip
If $\cE$ is a nilpotent evolution algebra of type $[n_1, \ldots, n_r]$ and  $B=\{e_1, \ldots, e_n\}$ is  
\emph{any} natural basis, then \cite[Lemma 2.7]{EL} shows that 
$$
\ann(\cE) =\espan{e_i \in B : e_i^2 = 0}.
$$
\noindent The same argument applied to $ \cE / \ann(\cE)$ shows that 
$$
\ann^{2}(\cE)= \espan{e_i \in B : e_i^2 \in \ann(\cE)},
$$
\noindent and in general, for any $i$, 
$$
\ann^{i}(\cE)= \espan{e_i \in B : e_i^2 \in \ann^{i-1}(\cE)}.
$$
Then $B$ splits as the disjoint union
$$ B = B_1 \cup \cdots \cup B_r$$
where $B_i =  \{e \in B \; | \; e^2 \in \ann^{i-1}(\cE), \; e \notin \ann^{i-1}(\cE) \}$
 
 Then for all $i = 1,\ldots, r$, $B_1 \cup \cdots \cup B_r $ is a basis of $\ann^i(\cE)$. 
 In particular each $\ann^i(\cE)$ is an evolution ideal (that is, it is an ideal in the usual sense, and it
 is an evolution algebra too), and it can be easily computed from any natural basis.
 
 Let $\cU_i \bydef \espan{B_i}$ for all $i = 1,\ldots, r$, so 
 $\cU_1 \oplus  \cdots \oplus \cU_i = \ann^i(\cE)$, for all $i = 1,\ldots, r$. 

\begin{proposition}\label{pr:UiU1}
Let $\cE$ be a finite-dimensional nilpotent evolution algebra. Then
for all $i = 2,\ldots, r$, $\cU_i\oplus \cU_1 = \ann_{\ann^i(\cE)}(\ann^{i-1}(\cE))$, 
(i.e. $\cU_i \oplus \cU_1 = \{ x \in  \ann^i(\cE) \; | \; x\ann^{i-1}(\cE) = 0\}$).
 \end{proposition}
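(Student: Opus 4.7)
The plan is to use the fact that the elements of $B = B_1 \cup \cdots \cup B_r$ form a natural basis of $\cE$, so that products of distinct basis elements vanish. Everything will then reduce to a direct computation.

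For the inclusion $\cU_i \oplus \cU_1 \subseteq \ann_{\ann^i(\cE)}(\ann^{i-1}(\cE))$, the summand $\cU_1 \subseteq \ann(\cE)$ kills all of $\cE$, hence $\ann^{i-1}(\cE)$. For $e \in B_i$ and $y \in \ann^{i-1}(\cE)$, expand $y = \sum_{f \in B_1 \cup \cdots \cup B_{i-1}} c_f f$ in the natural basis of $\ann^{i-1}(\cE)$; since $e \notin B_1 \cup \cdots \cup B_{i-1}$, every product $e\cdot f$ vanishes, so $e \cdot y = 0$.

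For the reverse inclusion, take $x \in \ann^i(\cE)$ with $x\cdot \ann^{i-1}(\cE) = 0$, and decompose
\[
x = x_1 + x_2 + \cdots + x_i, \qquad x_j \in \cU_j, \qquad x_j = \sum_{f \in B_j} c_f^{(j)} f.
\]
Fix any $k$ with $2 \le k \le i-1$ and any $f \in B_k$. Using that distinct elements of the natural basis $B$ annihilate each other,
\[
0 = x\cdot f = \sum_{j=1}^{i} \sum_{e \in B_j} c_e^{(j)}\, e\cdot f = c_f^{(k)}\, f^{2}.
\]
By definition of $B_k$ with $k \ge 2$, the element $f$ does not lie in $\ann^{k-1}(\cE) \supseteq \ann(\cE)$, and the description of $\ann(\cE)$ recalled just before the proposition (from \cite[Lemma 2.7]{EL}) forces $f^2 \ne 0$. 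Hence $c_f^{(k)} = 0$ for every $f \in B_k$, so $x_k = 0$ for $2 \le k \le i-1$, giving $x = x_1 + x_i \in \cU_1 \oplus \cU_i$.

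No step looks like a serious obstacle; the only subtlety is making sure that $f^2 \ne 0$ for every $f \in B_k$ with $k \ge 2$, which is exactly where the natural-basis characterisation of $\ann(\cE)$ is used. The orthogonality of the natural basis is what makes the computation of $x \cdot f$ collapse to the single term $c_f^{(k)} f^2$.
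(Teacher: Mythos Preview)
Your proof is correct and follows essentially the same approach as the paper: both use the orthogonality of the natural basis together with the characterization of the annihilator from \cite[Lemma 2.7]{EL}. The only difference is packaging: where you compute $x\cdot f$ coefficient by coefficient for each $f\in B_k$ with $2\le k\le i-1$, the paper observes in one stroke that $\ann_{\ann^i(\cE)}(\ann^{i-1}(\cE))=\cU_i\oplus\ann\bigl(\ann^{i-1}(\cE)\bigr)$ (since $\cU_i$ already annihilates $\ann^{i-1}(\cE)$ and $\ann^i(\cE)=\cU_i\oplus\ann^{i-1}(\cE)$), and then applies \cite[Lemma 2.7]{EL} to the evolution subalgebra $\ann^{i-1}(\cE)$ to get $\ann\bigl(\ann^{i-1}(\cE)\bigr)=\cU_1$. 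Your explicit computation is what lies behind that last equality, so the two arguments are really the same.
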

 \begin{proof}
 By definition of a natural basis $\cU_i(\cU_1 \oplus  \cdots \oplus \cU_{i-1}) = \cU_i \ann^{i-1}(\cE) = 0$ and 
 $\cU_i\subseteq \ann_{\ann^i(\cE)}(\ann^{i-1}(\cE))$. Hence
 $\ann_{\ann^i(\cE)}(\ann^{i-1}(\cE)) = \cU_i \oplus \ann_{\ann^{i-1}(\cE)}(\ann^{i-1}(\cE)) = \cU_i\oplus \cU_1$ 
 where the last equality follows from the result in \cite[Lemma 2.7]{EL}
 mentioned above.
  \end{proof}
  
  Therefore, the subspaces $\cU_i\oplus \cU_1$ are invariants of $\cE$ and do not depend on the natural basis chosen.
  
\smallskip
  
Recall that, in general, there is no uniqueness of natural bases (see \cite{EL}), but for nilpotent evolution algebras, Proposition \ref{pr:UiU1} gives certain rigidity:
  
\begin{corollary}\label{cor:basechange}
Let $B^1$ and $B^2$ be two natural bases of a finite-dimensional nilpotent evolution algebra $\cE$, ordered so that the first elements are 
   in $\ann(\cE)$, the next ones in $\ann^2(\cE)\setminus\ann(\cE)$, ... Then 
   the matrix of the base change has the following block structure:
\[\begin{pmatrix}
*&*&* &\dots&*\\
0&*&0 &\dots&0\\
0&0&* &\dots&0\\
\vdots&\vdots&\vdots&\ddots&\vdots\\
0&0& 0&\dots&*&
\end{pmatrix}.\]
 
   \end{corollary}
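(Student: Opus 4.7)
The plan is to read the displayed block pattern directly from two basis-independent subspaces of $\cE$: the annihilator $\ann(\cE)$, which by Lemma 2.7 of \cite{EL} equals $\cU_1$ independently of the natural basis chosen; and the subspaces $\cU_i\oplus\cU_1$ for $i\geq 2$, which by Proposition \ref{pr:UiU1} equal the basis-independent subspace $\ann_{\ann^i(\cE)}(\ann^{i-1}(\cE))$. Writing $\cU_i^1$ and $\cU_i^2$ for the subspaces attached to $B^1$ and $B^2$ respectively, each column of the change of basis matrix records the coordinates of an element of $B^2$ in the basis $B^1$, grouped into blocks of sizes $n_1,\dots,n_r$ determined by the type.

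For the first block column, the equality $\cU_1^1=\ann(\cE)=\cU_1^2$ shows that every element of $B_1^2$ is a linear combination of elements of $B_1^1$ alone, which places zeros in block rows $2,\dots,r$ of that column. For $i\geq 2$, the equality $\cU_1\oplus\cU_i^1=\cU_1\oplus\cU_i^2$ forces every element of $B_i^2$ to lie in $\espan{B_1^1\cup B_i^1}$, so every entry of the $i$-th block column vanishes outside block rows $1$ and $i$. This is exactly the claimed pattern, and invertibility of each diagonal block is automatic from invertibility of the whole matrix (or, equivalently, from the fact that both $\cU_i^1$ and $\cU_i^2$ project isomorphically onto $\ann^i(\cE)/\ann^{i-1}(\cE)$).

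There is essentially no obstacle here: all the genuine content has been absorbed into Proposition \ref{pr:UiU1} and the cited lemma from \cite{EL}, and the corollary is the immediate matrix-level translation of those two basis-independent identifications into a constraint on any change of basis compatible with the ordering of the annihilator chain.
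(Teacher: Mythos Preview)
Your proof is correct and follows exactly the approach the paper intends: the corollary is stated without proof immediately after Proposition~\ref{pr:UiU1} and the remark that the subspaces $\cU_i\oplus\cU_1$ are invariants of $\cE$, so the paper's own argument is precisely the implicit translation you have spelled out. You have simply made explicit what the paper leaves to the reader.
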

  
 \bigskip

\section{Some families of nilpotent evolution algebras}\label{se: somefamilies-nilpotent_evol}

Let $\FF$ be a field of characteristic not $2$ and let $\cU$ be a nonzero finite-dimensional vector space over $\FF$ with $\dim_\FF\cU=n$.
We will define in this section some families of nilpotent evolution algebras of very specific types. These will be instrumental
in the classifications of low-dimensional nilpotent evolution algebras.

\smallskip

\begin{df}\label{df:Ub}
 Let  $b: \cU \times \cU \longrightarrow \FF$ be a nondegenerate, symmetric, bilinear form. We define the algebra
 $\cE(\cU,b) \bydef \cU \times \FF$ with multiplication 
 \[ 
(u,\alpha) (v,\beta) = (0, b(u,v)),
\]
for any $u,v\in\cU$ and $\alpha,\beta\in\FF$.
\end{df}

\begin{proposition}
$\cE(\cU,b) $ is a nilpotent evolution algebra of type $[1,n]$.
\end{proposition}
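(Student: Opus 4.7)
The plan is to verify each assertion directly, using the hypotheses that $b$ is nondegenerate and symmetric and that $\mathrm{char}\,\FF\neq 2$.

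First, I would exhibit a natural basis. Since $\mathrm{char}\,\FF\neq 2$, the nondegenerate symmetric bilinear form $b$ admits an orthogonal basis $\{u_1,\dots,u_n\}$ of $\cU$, so that $b(u_i,u_j)=0$ for $i\neq j$ and $b(u_i,u_i)\neq 0$ for all $i$. Setting $e\bydef (0,1)$ and identifying each $u_i$ with $(u_i,0)\in\cU\times\FF$, I would check straight from Definition \ref{df:Ub} that the products of distinct elements of $B\bydef\{u_1,\dots,u_n,e\}$ vanish, while $u_i^2=(0,b(u_i,u_i))=b(u_i,u_i)\,e$ and $e^2=0$. Thus $B$ is a natural basis of $\cE(\cU,b)$, which in particular is an evolution algebra.

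Next, I would prove nilpotency by a direct computation: from the multiplication rule every product lies in $\{0\}\times\FF$, so $\cE(\cU,b)^2\subseteq\FF e$, and since $e$ annihilates everything, $\cE(\cU,b)^3=0$. Hence $\cE(\cU,b)$ is nilpotent.

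Finally, I would compute the upper annihilating series to read off the type. For the first step, if $x=(u,\alpha)$ satisfies $x\cE(\cU,b)=0$, then $b(u,v)=0$ for all $v\in\cU$, so $u=0$ by nondegeneracy; conversely every element of $\{0\}\times\FF$ clearly annihilates $\cE(\cU,b)$. Thus $\ann(\cE(\cU,b))=\FF e$, which has dimension $1$. For the second step, the quotient $\cE(\cU,b)/\ann(\cE(\cU,b))$ inherits the zero multiplication because $\cE(\cU,b)^2\subseteq\FF e=\ann(\cE(\cU,b))$; therefore its own annihilator is the whole quotient, giving $\ann^2(\cE(\cU,b))=\cE(\cU,b)$. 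The dimensions $1$ and $n+1$ yield the type $[1,n]$, as claimed.

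There is no real obstacle here; the only point where the hypotheses are genuinely used is the existence of an orthogonal basis (requiring $\mathrm{char}\,\FF\neq 2$ together with symmetry of $b$) and the identification of $\ann(\cE(\cU,b))$ with $\FF e$ (requiring nondegeneracy). Everything else is a one-line verification.
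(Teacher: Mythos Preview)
Your proof is correct and follows exactly the same approach as the paper's: exhibit an orthogonal basis of $\cU$ with respect to $b$, observe that it gives a natural basis of $\cE(\cU,b)$, and compute $\ann(\cE(\cU,b))=0\times\FF$ and $\ann^2(\cE(\cU,b))=\cE(\cU,b)$. The paper's version is simply much terser, stating these facts without the detailed verifications you include.
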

 \begin{proof}
 Let $\{u_1, \ldots, u_n\}$ be an orthogonal basis of $\cU$ relative to $b$. Then  $\{(u_1,0), \ldots, (u_n,0), (0,1)\}$ is a natural basis of $\cE =\cE(\cU,b)$ and
 $\ann(\cE)= 0 \times \FF$, $\ann^{2}(\cE)= \cE$.
   \end{proof}
   
\smallskip
\begin{df}\label{df:Ubf}
Let  $b: \cU \times \cU \longrightarrow \FF$ be a nondegenerate, symmetric, bilinear form and let $g:\cU \longrightarrow \cU$ be a symmetric endomorphism relative to $b$. 
Assume that $g$ is diagonalizable (this is always the case if $\FF=\RR$ and $b$ is a definite form). 
We define the algebra
 $\cE(\cU,b,g) \bydef \cU \times \FF \times \FF$ with multiplication 
\[
(u,\alpha, \beta ) (v,\gamma,\delta) = (0, b(u,v),b(g(u),v) + \alpha \gamma),
\]
for any $u,v\in\cU$ and $\alpha,\beta,\gamma,\delta\in\FF$.
\end{df}

\begin{proposition}
$\cE(\cU,b,g) $ is a nilpotent evolution algebra of type $[1,1,n]$.
\end{proposition}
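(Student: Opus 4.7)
The plan is to produce an explicit natural basis of $\cE(\cU,b,g)$, compute its upper annihilating series directly, and read off the type.

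First I would produce a basis of $\cU$ that diagonalizes both $b$ and $g$ simultaneously. Because $g$ is diagonalizable, $\cU$ decomposes as the direct sum of eigenspaces of $g$; because $g$ is $b$-symmetric, eigenspaces corresponding to distinct eigenvalues are $b$-orthogonal, hence the restriction of $b$ to each eigenspace is nondegenerate (as $b$ is nondegenerate globally). Within each eigenspace one can then pick a $b$-orthogonal basis in the usual way (here I would need $\mathrm{char}\,\FF\neq 2$, which is assumed). Concatenating these gives a basis $\{u_1,\dots,u_n\}$ of $\cU$ with $b(u_i,u_j)=0$ for $i\neq j$ and $g(u_i)=\lambda_i u_i$.

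Next I would set $e_i\bydef(u_i,0,0)$ for $i=1,\dots,n$, $x\bydef(0,1,0)$, $y\bydef(0,0,1)$, and check that $B\bydef\{e_1,\dots,e_n,x,y\}$ is a natural basis. Using the multiplication rule of Definition \ref{df:Ubf}, the pairwise products involving $y$ vanish (since the first two slots of $y$ are $0$), $x\cdot e_i=0$ and $x\cdot y=0$ (since the first slot of $x$ is $0$ and the scalar slots of $e_i,y$ are $0$), and $e_i e_j=(0,b(u_i,u_j),\lambda_i b(u_i,u_j))=0$ for $i\neq j$ by orthogonality. So $B$ is indeed natural.

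Finally I would compute the squares and upper annihilating series. One has
\[
e_i^2=(0,b(u_i,u_i),\lambda_i b(u_i,u_i)),\qquad x^2=(0,0,1)=y,\qquad y^2=0,
\]
where each $b(u_i,u_i)\neq 0$ since $b$ is nondegenerate on each eigenspace. By \cite[Lemma 2.7]{EL}, $\ann(\cE)=\espan{e\in B:e^2=0}=\FF y$. Then $e_i^2\notin\ann(\cE)$ (the $x$-component is nonzero), while $x^2=y\in\ann(\cE)$, so $\ann^2(\cE)=\espan{x,y}$. At the next step every $e_i^2$ lies in $\ann^2(\cE)=\espan{x,y}$, hence $\ann^3(\cE)=\cE$. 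This both shows that $\cE$ is nilpotent (its upper annihilating series reaches $\cE$) and yields the type $[1,1,n]$.

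The only genuinely delicate step is the simultaneous diagonalization in the first paragraph; the rest is a mechanical computation that reduces to the nondegeneracy of $b$ on the $g$-eigenspaces.
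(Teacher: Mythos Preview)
Your proof is correct and follows exactly the same approach as the paper's: exhibit a $b$-orthogonal basis of $\cU$ consisting of eigenvectors of $g$, form the corresponding natural basis of $\cE(\cU,b,g)$, and compute the upper annihilating series directly. The paper's version is terser (it simply asserts that ``our assumptions imply that there is an orthogonal basis, relative to $b$, consisting of eigenvectors of $g$'' and then lists the annihilators), whereas you have spelled out the justification for simultaneous diagonalization and the nonvanishing of $b(u_i,u_i)$; but there is no difference in substance.
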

 \begin{proof}
 Our assumptions imply that there is an orthogonal basis, relative to $b$, 
 consisting of eigenvectors of $g$: $\{u_1, \ldots, u_n\}$. 

Then 
 $\{(u_1,0,0), \ldots, (u_n,0,0), (0,1,0),(0,0,1)\}$ is a natural basis of $\cE =\cE(\cU,b,g)$. Besides
 $\ann(\cE)= \FF(0,0,1) = 0 \times 0 \times \FF$, $\ann^{2}(\cE)= 0 \times \FF \times \FF$, and 
 $\ann^{3}(\cE)=\cE$.
   \end{proof}

\smallskip

\begin{df}\label{df:Ubfg}
Let  $b: \cU \times \cU \longrightarrow \FF$ be a nondegenerate, symmetric, bilinear form and let
$f,g:\cU \longrightarrow \cU$ be two commuting, symmetric (relative to $b$), diagonalizable endomorphisms.  We define the algebra
 $\cE(\cU,b,f,g) \bydef \cU \times \FF\times \FF \times \FF$ with multiplication 
 \[
(u,\alpha, \beta, \gamma) (v,\epsilon,\delta,\eta ) = 
(0, b(u,v),b(f(u),v) + \alpha\epsilon , b(g(u),v) + \beta \delta),
\]
for any $u,v\in\cU$ and $\alpha,\beta,\gamma,\epsilon,\delta,\eta\in\FF$.
\end{df}

\begin{proposition}
 $\cE(\cU,b,f,g)$ is a nilpotent evolution algebra whose type is $[1,1,1,n]$.
\end{proposition}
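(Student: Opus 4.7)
The plan is to imitate the proofs of the two previous propositions in this section: construct an explicit natural basis of $\cE := \cE(\cU,b,f,g)$ and then read off the upper annihilating chain from its multiplication table.

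The only genuinely new ingredient is the existence of a simultaneous $b$-orthogonal eigenbasis for $f$ and $g$ on $\cU$. I would obtain it by the standard argument for commuting symmetric diagonalizable operators: decompose $\cU$ as the $b$-orthogonal sum of the eigenspaces of $f$ (orthogonality of distinct eigenspaces is forced by $b$-symmetry, and $b$ restricts nondegenerately to each summand), observe that $g$ leaves each $f$-eigenspace invariant by commutativity and still acts there symmetrically and diagonalizably, and within each common eigenspace of $f$ and $g$ pick a $b$-orthogonal basis (possible in characteristic not two). This produces $\{u_1,\dots,u_n\}$ with $f(u_i)=\lambda_i u_i$, $g(u_i)=\mu_i u_i$, and $b(u_i,u_j)=0$ for $i\ne j$.

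With this basis in hand, I would set $e_i:=(u_i,0,0,0)$ for $i=1,\ldots,n$, together with $v_1:=(0,1,0,0)$, $v_2:=(0,0,1,0)$, and $v_3:=(0,0,0,1)$. A direct expansion of the product formula shows that all cross products among these $n+3$ vectors vanish: the $e_ie_j$ for $i\ne j$ by the three orthogonality relations $b(u_i,u_j)=b(f(u_i),u_j)=b(g(u_i),u_j)=0$, and the mixed products involving the $v_k$ (and the products $v_kv_\ell$ with $k\ne\ell$) by inspection of the formula. Hence $\{e_1,\dots,e_n,v_1,v_2,v_3\}$ is a natural basis, with the nonzero squares
\[
e_i^2 = \beta_i v_1 + \beta_i\lambda_i v_2 + \beta_i\mu_i v_3, \qquad v_1^2=v_2, \qquad v_2^2=v_3, \qquad v_3^2=0,
\]
where $\beta_i:=b(u_i,u_i)\ne 0$.

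Finally I would apply the description of $\ann^i(\cE)$ in a natural basis given in Section~\ref{se: upperseries}. Since $v_3^2=0$ is the only vanishing square, $\ann(\cE)=\FF v_3$; then $v_2^2\in\ann(\cE)$ gives $\ann^2(\cE)=\FF v_3\oplus\FF v_2$; next $v_1^2\in\ann^2(\cE)$ gives $\ann^3(\cE)=\FF v_3\oplus\FF v_2\oplus\FF v_1$; and finally each $e_i^2$ lies in $\ann^3(\cE)$, so $\ann^4(\cE)=\cE$. The successive dimension jumps $1,1,1,n$ yield the claimed type $[1,1,1,n]$, and the chain reaching $\cE$ shows $\cE$ is nilpotent. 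The only nontrivial step is the simultaneous orthogonal diagonalization of $f$ and $g$; everything else is bookkeeping.
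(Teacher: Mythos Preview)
Your proof is correct and follows essentially the same approach as the paper's: exhibit a simultaneous $b$-orthogonal eigenbasis $\{u_1,\dots,u_n\}$ for $f$ and $g$, form the natural basis $\{(u_i,0,0,0),(0,1,0,0),(0,0,1,0),(0,0,0,1)\}$, and read off the upper annihilating series. You supply more detail than the paper (the simultaneous diagonalization argument and the explicit multiplication table), but the structure is identical.
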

 \begin{proof}
 Our assumptions imply that there is an orthogonal basis of $\cU$, relative to $b$, consisting of common eigenvectors for $f$ and $g$: $\{u_1, \ldots, u_n\}$. 

Then 
 $\{(u_1,0,0,0), \ldots, (u_n,0,0,0), (0,1,0,0),(0,0,1,0),(0,0,0,1)\}$ is a natural basis of $\cE =\cE(\cU,b,f,g)$. Besides,
 $\ann(\cE)= 0 \times 0 \times 0 \times \FF$, $\ann^{2}(\cE)= 0 \times 0 \times \FF \times \FF$,
 $\ann^{3}(\cE)= 0 \times \FF \times \FF \times \FF$, and 
 $\ann^{4}(\cE)=\cE$.
   \end{proof}
   
These algebras give all the nilpotent evolution algebras of types $[1,n]$, $[1,1,n]$ and $[1,1,1,n]$, up to isomorphism. The situation for type $[1,1,1,1,n]$ is more complicated, as shown by the classification in Section \ref{se:classificationdimfive}.
   
\begin{theorem}\label{teo:threetypes}
Let $\cE$ be  nilpotent evolution algebra.
\begin{romanenumerate}
\item 
If $\cE$ is of type $[1,n]$, then $\cE$ is isomorphic to $\cE(\cU,b)$ for some $(\cU,b)$ as in 
Definition \ref{df:Ub}. Moreover, for two such pairs $(\cU,b)$ and $(\cU',b')$, $\cE(\cU,b)$ is 
isomorphic to $\cE(\cU',b')$ if and only if there is a similarity $\psi:(\cU,b)\rightarrow (\cU',b')$, 
i.e., a linear isomorphism $\psi:\cU\rightarrow\cU'$ such that there is a nonzero scalar $\mu$, called the norm of 
$\psi$, such that $b'(\psi(u), \psi(v)) = \mu b(u,v)$ for all $u,v \in \cU$. 

\item  If $\cE$ is of type $[1,1,n]$, then $\cE$ is isomorphic to $\cE(\cU,b,g)$ for some $(\cU,b,g)$ as in Definition 
\ref{df:Ubf}. Moreover, for two such triples $(\cU,b,g)$ and $(\cU',b',g')$, 
$\cE(\cU,b,g)$ is isomorphic to $\cE(\cU',b',g')$ if and only if 
there is a similarity $\psi: (\cU,b) \longrightarrow (\cU',b')$ 
and a scalar $\nu \in \FF$ such that $ \psi^{-1} \circ g' \circ \psi  = \mu g + \mu^{-1} \nu \id$, 
where $\mu $ is the norm of $\psi$. 

\item 
If $\cE$ is of type $[1,1,1,n]$, then $\cE$ is isomorphic to  $\cE(\cU,b,f,g)$ for some $(\cU,b,f,g)$ as in Definition 
\ref{df:Ubfg}. Moreover, for two such quadruples $(\cU,b,f,g)$ and $(\cU',b',f',g')$, the corresponding algebras
$\cE(\cU,b,f,g)$ and $\cE(\cU',b',f',g')$ are isomorphic if and only if there is a 
similarity $\psi: (\cU,b) \longrightarrow (\cU',b')$ and a scalar $\nu \in \FF$ such 
that $\psi^{-1} \circ f' \circ \psi  = \mu f$ and $\psi^{-1} \circ g' \circ \psi  = \mu^3 g + \mu^{-1} \nu \id$, 
where $\mu $ is the norm of $\psi$.
\end{romanenumerate}
\end{theorem}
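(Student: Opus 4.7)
The theorem has three parts, each proved in two stages: realization and classification.

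For realization, fix a natural basis of $\cE$ adapted to the upper annihilating series (by Corollary~\ref{cor:basechange}): $\{e_0,\ldots,e_{r-2},f_1,\ldots,f_n\}$ with $e_i\in\ann^{i+1}(\cE)\setminus\ann^i(\cE)$ and $f_j\in\cE\setminus\ann^{r-1}(\cE)$, where $r=2,3,4$ in the three parts. The leading coefficient of $e_i^2$ on $e_{i-1}$ must be nonzero (otherwise $e_i\in\ann^i(\cE)$), so a bottom-up rescaling of the tower normalizes each relation to $e_i^2=e_{i-1}$. Expanding $f_j^2=\sum_{k=0}^{r-2}c_{jk}e_k$ with $c_{j,r-2}\ne 0$, set $\cU=\espan{f_1,\ldots,f_n}$ and $b(f_i,f_j)=c_{j,r-2}\delta_{ij}$; in parts (ii)--(iii), take $g$ diagonal in $\{f_j\}$ with eigenvalues $c_{j,0}/c_{j,r-2}$, and in (iii), $f$ diagonal with eigenvalues $c_{j,1}/c_{j,r-2}$. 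Nondegeneracy, symmetry with respect to $b$, diagonalizability, and commutation are automatic from this diagonal construction, and the canonical identification is an algebra isomorphism $\cE(\cU,b,\ldots)\to\cE$.

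For classification, let $\varphi:\cE(\cU,b,\ldots)\to\cE(\cU',b',\ldots)$ be an algebra iso. Preservation of the upper annihilating series forces $\varphi$ to send each source tower vector $e_i$ to a combination of target tower vectors $e_0',\ldots,e_i'$ with nonzero leading coefficient; the normalized relations $e_i^2=e_{i-1}$ then force these leading scalars to satisfy a square recursion, producing the sequence $\mu,\mu^2,\mu^4$ in part (iii). Writing $\varphi(u,0,\ldots)=(\psi(u),a_1(u),\ldots,a_{r-1}(u))$, the zero products $(u,0,\ldots)\cdot e_i=0$ force $a_1=\cdots=a_{r-2}=0$. Equating $\varphi((u,0,\ldots)(v,0,\ldots))$ with $\varphi(u,0,\ldots)\varphi(v,0,\ldots)$ layer by layer then yields, in the top layer, the similarity relation $b'(\psi(u),\psi(v))=\mu b(u,v)$, and in the lower layers, relations of the form $\psi^{-1}\circ f'\circ\psi=\mu f+\kappa\id$ and, in (iii), $\psi^{-1}\circ g'\circ\psi=\mu^3 g+\mu\kappa^2 f+\mu^{-1}\nu\id$, where $\kappa$ and $\nu$ come from the tower components of $\varphi$.

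The main obstacle is eliminating the shift $\kappa$ to recover the simpler form stated in the theorem. If $f$ is not a scalar operator, matching the eigenvalues of $\psi^{-1}\circ f'\circ\psi$ with those of $f'$ (which are the same multiset as those of $f$ rescaled by $\mu$) leaves no room for $\kappa$, so $\kappa=0$ is forced; if $f$ is a scalar operator, a suitable rescaling $\psi\mapsto t\psi$ shifts $\mu\mapsto t^2\mu$ in a way that produces another iso with $\kappa=0$. Either way the stated criterion follows, and the converse direction is a direct recipe: given $\psi$, $\mu$ and $\nu$, assemble $\varphi$ componentwise from the natural basis and verify the structural identities.
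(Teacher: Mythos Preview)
Your realization argument and most of the classification argument are fine, and indeed parallel the paper's approach. The genuine gap is in part (iii), in your final ``elimination of $\kappa$'' step.

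Your eigenvalue-matching argument is circular. You write that the eigenvalues of $\psi^{-1}\circ f'\circ\psi$ ``are the same multiset as those of $f$ rescaled by $\mu$'', but that is exactly the statement $\kappa=0$ that you are trying to prove: a priori the eigenvalues of $f'$ are $\{\mu\lambda_j+\kappa\}$, and nothing you have established forces the shift to vanish. The rescaling argument for scalar $f$ also fails: replacing $\psi$ by $t\psi$ leaves $\psi^{-1}f'\psi$ unchanged while changing the norm to $t^2\mu$, so if $f=c\,\id$ you need $\mu c+\kappa=t^2\mu c$, which is impossible when $c=0$ and $\kappa\neq 0$; and even when $c\neq 0$, the third-layer relation for $g$ then requires $\mu^3=(t^2\mu)^3$, forcing $t^6=1$, which in general conflicts with the value of $t^2$ you just chose. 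So neither branch of your argument eliminates $\kappa$.

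The fix is immediate and was already in your hands. You cited Corollary~\ref{cor:basechange} but only extracted upper-triangularity from it; its actual content (via Proposition~\ref{pr:UiU1}) is that $\cU_3\oplus\cU_1=\ann_{\ann^3(\cE)}(\ann^2(\cE))=0\times\FF\times 0\times\FF$ is an invariant of the algebra, hence preserved by $\varphi$. Since $(0,1,0,0)$ lies in this subspace, $\varphi((0,1,0,0))=(0,\mu,0,\nu)$ with $\rho=0$, and your $\kappa$ never appears. Equivalently, without invoking that proposition, you can simply use one more vanishing product: $e_1e_2=(0,1,0,0)(0,0,1,0)=0$, so
\[
0=\varphi(e_1)\varphi(e_2)=(0,\mu,\rho,\nu)(0,0,\mu^2,\rho^2)=(0,0,0,\rho\mu^2),
\]
forcing $\rho=0$. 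This is precisely how the paper proceeds: it identifies $\varphi((0,1,0,0))\in 0\times\FF\times 0\times\FF$ directly from the invariance of $\cU_3\oplus\cU_1$, and the relations $\psi^{-1}f'\psi=\mu f$ and $\psi^{-1}g'\psi=\mu^3 g+\mu^{-1}\nu\,\id$ then fall out with no extra work.
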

 \begin{proof}
 We will give the proof of (iii), because (i) and (ii) are simpler.  Let $\cE$ be of type $[1,1,1,n]$, 
 $ \{u_1, \ldots, u_n, w, t ,s\}$ a natural basis with
 $\ann(\cE)= \FF s$, $\ann^{2}(\cE) = \FF t \oplus \FF s$, and $\ann^{3}(\cE) = \FF w \oplus \FF t \oplus \FF s$. 
 Then there are scalars $\alpha, \beta, \gamma  \in \FF$, $\alpha \neq 0 \neq \gamma $, 
 such that $w^2 = \alpha t + \beta s$, $t^2 = \gamma s$. 
 But $\{u_1, \ldots, u_n, w, w^2 , (w^2)^2\}$ is another natural basis so we may assume, without 
 loss of generality, that $w^2 = t$ and $t^2 = s$. For all $i =1, \ldots,n$, 
 $u_i^2 = \lambda_iw + \mu_i t +\nu_i s$ with $ \lambda_i, \mu_i, \nu_i \in \FF$, $\lambda_i \neq 0$ because 
 $u_i\notin \ann^3(\cE)$. 
 
 Let $\cU = \FF u_1 \oplus  \cdots \oplus \FF u_n $ and define $b: \cU \times \cU \longrightarrow \FF$  by $b(u_i, u_j) = 0 $ for $ i \neq j$,
 $b(u_i, u_i) =\lambda_i$ for any $i=1,\ldots,n$. Define $f$ (respectively $g$) by $f(u_i) = \lambda_i^{-1} \mu_i u_i$  (respectively $g(u_i) = \lambda_i^{-1} \nu _i u_i$).
 
 Then the linear map  $\varphi: \cE \longrightarrow \cE(\cU,b,f,g) $ such that 
\[ 
u \mapsto (u,0,0,0),\ 
w \mapsto (0, 1,0,0),\ 
t \mapsto (0,0,1,0),\ 
s \mapsto (0,0,0,1),
\]
is  an isomorphism of algebras.
 
For a subalgebra $\cS$ of an algebra $\cA$, $\ann(\cS)$ denotes the subspace $\{x\in\cA\;|\; x\cS=\cS x=0\}$.
 If $\phi:  \cE(\cU,b,f,g) \longrightarrow \cE(\cU',b',f',g')$ is an isomorphism of algebras, then 
 since $\ann(\ann^{3}(\cE(\cU,b,f,g)) )= \ann(0\times \FF \times \FF \times \FF) = \cU\times 0 \times 0 \times \FF$, and similarly for $\cE(\cU',b',f',g')$, 
 for any $u \in \cU$, $\phi((u,0,0,0)) = (\psi(u),0,0,\chi(u))$ for a linear isomorphism 
 $\psi: \cU \longrightarrow \cU'$, and a linear form $\chi: \cU \longrightarrow \FF$. 

Also $\ann_{\ann^{3}(\cE(\cU,b,f,g))}(\ann^{2}(\cE(\cU,b,f,g))) = 0 \times \FF \times 0 \times \FF$, 
and similarly for $\cE(\cU',b',f',g')$, 
so $\phi((0,1,0,0)) = (0, \mu, 0, \nu)$ for some $\mu, \nu \in \FF$, $\mu \neq 0$.
 Then 
 \[
  \begin{split}
 \phi((0,0,1,0))  &= \phi((0,1,0,0)^2) =(0, \mu, 0, \nu)^2 = (0, 0, \mu^2, 0),\\
 \phi((0,0,0,1))  &= \phi((0,0,1,0)^2) =(0, 0, \mu^2, 0)^2 = (0, 0, 0, \mu^4).
 \end{split}
 \]
 
Now, for $ u,v \in \cU$, 
\[
\begin{split}
&\phi((u,0,0,0) (v,0,0,0))= \phi((0, b(u,v), b(f(u),v), b(g(u),v))\\
    &\qquad\qquad= \bigl(0, \mu b(u,v), \mu^2 b(f(u),v), \nu b(u,v) +\mu^4 b(g(u),v)\bigr),\\[2pt]
&\phi((u,0,0,0)) \phi ((v,0,0,0)) = (\psi(u),0,0,\chi(u))(\psi(v),0,0,\chi(v)) \\
 &\qquad\qquad= \bigl(0, b'(\psi(u),\psi(v)), b'(f'(\psi(u)), \psi(v)), b'(g'(\psi(u)), \psi(v)\bigr).
 \end{split}
 \]
 Therefore,
 \begin{itemize}
  \item 
  $b'(\psi(u),\psi(v)) = \mu b(u,v)$, for all $ u,v \in \cU$, i.e. $\psi $ is a similarity of norm $\mu$.
  
   \item 
   $b'(f'(\psi(u)), \psi(v)) = \mu^2 b(f(u),v)=\mu b'(\psi(f(u)),\psi(v))$ for all $u,v \in \cU$, so $\psi^{-1}\circ f' \circ \psi = \mu f$.
   
    \item 
    $b'(g'(\psi(u)), \psi(v)) =\nu b(u,v) + \mu^4 b(g(u),v)$  for all $u,v \in \cU$, 
    so $\psi^{-1}\circ g' \circ \psi = \mu^3 g + \mu^{-1} \nu \id$.
 \end{itemize}
 
 Conversely, if  $\psi: (\cU,b) \longrightarrow (\cU',b')$  is a similarity of norm  
 $\mu$ such that $\psi^{-1}\circ f' \circ \psi = \mu f$, and
 $\psi^{-1}\circ g' \circ \psi = \mu^3 g + \mu^{-1} \nu \id$ for a scalar $\nu$, then the linear isomorphism  
 $\phi: \cE(\cU,b,f,g) \longrightarrow \cE(\cU',b',f',g') $ such that 
 $(u, \alpha, \beta, \gamma ) \mapsto (\psi(u), \mu \alpha, \mu^2 \beta, \mu^4 \gamma + \nu \alpha )$ is an 
 isomorphism of algebras.
   \end{proof}
   
\begin{corollary} \label{co:Ubfg}
    $\cE(\cU,b,f,g)$ is isomorphic to $\cE(\cU, \alpha b,\alpha f, \alpha^3 g + \beta \id)$ 
    for any $\alpha, \beta  \in \FF, \alpha \neq 0$.
\end{corollary}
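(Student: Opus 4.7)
The plan is to invoke Theorem \ref{teo:threetypes}(iii) with the identity map as similarity. Setting $\cU' = \cU$, $b' = \alpha b$, $f' = \alpha f$, and $g' = \alpha^3 g + \beta\id$, I first want to verify that $\cE(\cU, \alpha b, \alpha f, \alpha^3 g + \beta\id)$ is a legal instance of Definition \ref{df:Ubfg}: since $\alpha \neq 0$, the form $\alpha b$ is still nondegenerate and symmetric; $\alpha f$ and $\alpha^3 g + \beta\id$ still commute, are still simultaneously diagonalizable (they share eigenvectors with $f$ and $g$), and symmetry with respect to $\alpha b$ is the same condition as symmetry with respect to $b$.

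Next, I take $\psi = \id_\cU$. Then $\psi$ is a similarity $(\cU,b) \to (\cU, \alpha b)$ of norm $\mu = \alpha$, because $(\alpha b)\bigl(\psi(u),\psi(v)\bigr) = \alpha\, b(u,v)$ for all $u,v \in \cU$.

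Now I check the two endomorphism conditions in Theorem \ref{teo:threetypes}(iii). The condition $\psi^{-1}\circ f' \circ \psi = \mu f$ becomes $\alpha f = \alpha f$, which is automatic. The condition $\psi^{-1}\circ g' \circ \psi = \mu^3 g + \mu^{-1}\nu\,\id$ becomes $\alpha^3 g + \beta\id = \alpha^3 g + \alpha^{-1}\nu\,\id$, which holds precisely when we choose $\nu = \alpha\beta$. Thus all the hypotheses of Theorem \ref{teo:threetypes}(iii) are satisfied with $\mu = \alpha$ and $\nu = \alpha\beta$, and the theorem yields the desired isomorphism.

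I do not expect any real obstacle: the corollary is essentially a direct unpacking of the isomorphism criterion in part (iii) of the theorem, specialized to $\psi = \id$. The only thing to be careful about is matching the scalars correctly, in particular recognizing that the coefficient of $\id$ in the transformed $g$ picks up the factor $\mu^{-1} = \alpha^{-1}$, which forces the choice $\nu = \alpha\beta$ rather than $\nu = \beta$.
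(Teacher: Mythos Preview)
Your proof is correct and follows exactly the intended route: the corollary is an immediate consequence of Theorem~\ref{teo:threetypes}(iii) with $\psi=\id_\cU$, norm $\mu=\alpha$, and $\nu=\alpha\beta$. Your care in checking that $(\cU,\alpha b,\alpha f,\alpha^3 g+\beta\,\id)$ still satisfies the hypotheses of Definition~\ref{df:Ubfg} is a nice touch that the paper leaves implicit.
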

   
\smallskip
   
 \begin{df}\label{df:1n1}
 Let  $b: \cU \times \cU \longrightarrow \FF$ be a nondegenerate, symmetric, bilinear form and $0 \neq u \in \cU$. We define the algebra
 $\cE(\cU,b, u) \bydef \FF \times \cU \times \FF$ with multiplication 
 \[
(\alpha,x,\beta) (\gamma,y,\delta) = (0,\alpha \gamma u,  b(x,y)),
\]
for any $x,y\in\cU$ and $\alpha,\beta,\gamma,\delta\in\FF$.
\end{df}

\begin{proposition}
$\cE(\cU,b,u) $ is a nilpotent evolution algebra of type $[1,n,1]$.
\end{proposition}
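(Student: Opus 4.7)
The plan is to mimic exactly the strategy used in the three preceding propositions: exhibit an explicit natural basis of $\cE(\cU,b,u)$, read off the squares, and apply the description of $\ann^i(\cE)$ in terms of any natural basis that was established just before Proposition \ref{pr:UiU1}.

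First, since $\FF$ has characteristic not $2$ and $b$ is nondegenerate symmetric, pick an orthogonal basis $\{u_1,\ldots,u_n\}$ of $\cU$ relative to $b$, and write $u=\sum_{i=1}^n c_i u_i$. Set
\[
f\bydef (0,0,1),\quad v_i\bydef (0,u_i,0)\ (1\le i\le n),\quad e\bydef (1,0,0),
\]
and consider the ordered set $B=\{f,v_1,\ldots,v_n,e\}$ inside $\cE=\cE(\cU,b,u)$. From the definition of the multiplication one immediately checks $v_iv_j=(0,0,b(u_i,u_j))=0$ for $i\ne j$, $ev_i=0$, $ef=0$, $v_if=0$; thus $B$ is a natural basis of $\cE$.

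Next I compute the squares of the basis elements: $f^2=0$, $v_i^2=b(u_i,u_i)\,f$, and $e^2=(0,u,0)=\sum_{i=1}^n c_i v_i$. Since any orthogonal basis of a nondegenerate symmetric bilinear form has $b(u_i,u_i)\ne 0$ for every $i$, and $u\ne 0$ ensures $e^2\ne 0$, we can now apply the formulas
\[
\ann^i(\cE)=\espan{x\in B\;:\;x^2\in\ann^{i-1}(\cE)}
\]
(valid for any natural basis, as recalled in the paragraphs preceding Proposition \ref{pr:UiU1}). This gives in succession $\ann(\cE)=\FF f$, then $\ann^2(\cE)=\FF f\oplus\espan{v_1,\ldots,v_n}$ because each $v_i^2\in\FF f=\ann(\cE)$, and finally $\ann^3(\cE)=\cE$ because $e^2\in\ann^2(\cE)$.

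The dimensions of the successive terms are $1$, $n+1$, $n+2$, so the successive quotients have dimensions $1$, $n$, $1$, which is precisely the type $[1,n,1]$. The upper annihilating series reaches $\cE$, so by the remark following the definition of nilpotency, $\cE$ is nilpotent. There is no serious obstacle here; the only point requiring a bit of care is to observe that the expansion $u=\sum c_i u_i$ is allowed to be arbitrary (in particular one does \emph{not} need to incorporate $u$ itself into the orthogonal basis), since the natural-basis property of $B$ depends only on the vanishing of the cross products $v_iv_j$, $ev_i$, $ef$, $v_if$, not on the value of $e^2$.
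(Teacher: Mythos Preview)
Your proof is correct and follows essentially the same approach as the paper: pick an orthogonal basis of $(\cU,b)$, observe that $\{(1,0,0),(0,u_1,0),\ldots,(0,u_n,0),(0,0,1)\}$ is a natural basis, and read off the annihilating series. You have simply spelled out in more detail the verification that the cross products vanish and that $b(u_i,u_i)\ne 0$, $e^2\ne 0$, which the paper leaves implicit.
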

 \begin{proof}
 Let $\{u_1, \ldots, u_n\}$ be an orthogonal basis of $\cU$ relative to $b$. Then  $\{(1,0,0), (0,u_1,0), \ldots, (0,u_n,0), (0,0,1)\}$ is a natural basis of $\cE =\cE(\cU,b,u)$. Besides
 $\ann(\cE)= 0 \times 0 \times \FF$, $\ann^{2}(\cE) = 0 \times \cU \times \FF$, and $\ann^{3}(\cE)= \cE$.
   \end{proof}
   
\begin{theorem}\label{teo:onetype}
Let $\cE$ be  nilpotent evolution algebra of type $[1,n,1]$. Then $\cE$ is isomorphic to an algebra
$\cE(\cU,b,u)$, for some $(\cU,b,u)$ as in Definition \ref{df:1n1}. Moreover, for any two such triples $(\cU,b,u)$ and 
$(\cU',b',u')$, the algebras $\cE(\cU,b,u)$ and $\cE(\cU',b',u')$ are isomorphic if and only if there is 
a similarity $\psi: (\cU,b) \longrightarrow (\cU',b')$ such that $\psi(u) = u'$.
\end{theorem}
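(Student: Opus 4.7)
The plan is to adapt the proof of Theorem \ref{teo:threetypes}(iii) to type $[1,n,1]$. For the existence part, I fix a natural basis of $\cE$ split according to the upper annihilating series as $B_1=\{s\}$, $B_2=\{u_1,\ldots,u_n\}$, $B_3=\{w\}$. The natural-basis property forces $s^2=0$, $u_i^2=\lambda_i s$ with $\lambda_i\neq 0$ (since $u_i\notin\ann(\cE)$), and $w^2\in\ann^2(\cE)$. Since $w\cE=\FF w^2$, the containment $w^2\in\ann(\cE)$ would place $w$ in $\ann^2(\cE)$; hence writing $w^2=\alpha s+\sum_i\beta_iu_i$, at least one $\beta_i$ is nonzero. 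A change of basis $u_i\mapsto u_i':=u_i+c_is$, with $c_i$ chosen so that $\sum_i\beta_ic_i=\alpha$, preserves the natural-basis property and allows one to assume $w^2=\sum_i\beta_iu_i'\in\espan{u_1',\ldots,u_n'}$. Setting $\cU:=\espan{u_1',\ldots,u_n'}$, $b(u_i',u_j'):=\lambda_i\delta_{ij}$, and $u:=w^2\neq 0$, the linear extension of $s\mapsto(0,0,1)$, $u_i'\mapsto(0,u_i',0)$, $w\mapsto(1,0,0)$ is an algebra isomorphism $\cE\to\cE(\cU,b,u)$.

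For the isomorphism classification, let $\phi\colon\cE(\cU,b,u)\to\cE(\cU',b',u')$ be an algebra isomorphism. By Proposition \ref{pr:UiU1}, $\phi$ preserves the invariant subspaces $\ann(\cE)$, $\ann^2(\cE)$, and $\ann_{\ann^3(\cE)}(\ann^2(\cE))=\cU_1\oplus\cU_3$, which in our realization are $0\times 0\times\FF$, $0\times\cU\times\FF$, and $\FF\times 0\times\FF$ respectively. Consequently $\phi((0,0,1))=(0,0,\tau)$, $\phi((0,x,0))=(0,\psi(x),\chi(x))$, and $\phi((1,0,0))=(\mu,0,\nu)$ for a linear map $\psi\colon\cU\to\cU'$, a linear form $\chi\colon\cU\to\FF$, and scalars $\tau,\mu\neq 0$ (nonvanishing by bijectivity). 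Equating $\phi((0,x,0)(0,y,0))$ with $\phi((0,x,0))\phi((0,y,0))$ yields $b'(\psi(x),\psi(y))=\tau b(x,y)$, so $\psi$ is a similarity of norm $\tau$. Equating $\phi((1,0,0)^2)$ with $\phi((1,0,0))^2$ yields $\psi(u)=\mu^2u'$ (and incidentally $\chi(u)=0$). Then $\psi_1:=\mu^{-2}\psi$ is still a similarity $(\cU,b)\to(\cU',b')$ (of norm $\mu^{-4}\tau$) and now satisfies $\psi_1(u)=u'$, as required. Conversely, given a similarity $\psi$ of norm $\tau$ with $\psi(u)=u'$, the linear map $(\alpha,x,\beta)\mapsto(\alpha,\psi(x),\tau\beta)$ is routinely checked to be an algebra isomorphism $\cE(\cU,b,u)\to\cE(\cU',b',u')$.

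The step requiring care is the change-of-basis trick $u_i\mapsto u_i+c_is$ in the existence part: it absorbs the $\ann(\cE)$-component of $w^2$ and relies on $w^2$ having nonzero projection onto $\espan{u_1,\ldots,u_n}$, which is precisely the translation of $w\notin\ann^2(\cE)$; without it, the resulting algebra would not match the normal form of Definition \ref{df:1n1}. The rescaling $\psi\mapsto\mu^{-2}\psi$ in the forward direction is the second key point, as it turns the a~priori weaker condition $\psi(u)\in\FF^\times u'$ into $\psi(u)=u'$ on the nose, which is what distinguishes the $[1,n,1]$ case from the types handled in Theorem \ref{teo:threetypes}.
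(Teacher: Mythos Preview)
Your proof is correct and follows essentially the same route as the paper's: the same absorption of the $\ann(\cE)$-component of $w^2$ into a modified natural basis, the same identification of the invariant subspace $\FF\times 0\times\FF$ (the paper writes it as $\ann(\ann^2(\cE))$, you via Proposition \ref{pr:UiU1}), and the same rescaling $\psi\mapsto\mu^{-2}\psi$ to force $\psi(u)=u'$. The only cosmetic difference is that the paper adjusts a single $u_i$ by $\alpha_i^{-1}\alpha s$ rather than distributing the correction over several $c_i$'s.
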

 \begin{proof}
 Let $\cE$ be of type $[1,n,1]$, $B = \{a,u_1, \ldots, u_n,s\}$  a natural basis of $\cE$ with  
 $\ann(\cE)= \FF s$, 
 $\ann^{2}(\cE) = \FF u_1 \oplus  \cdots \oplus \FF u_n \oplus \FF s$, and  $\ann^{3}(\cE) = \cE$. 
 Then, $s^2 = 0$, $u_i^2 = \lambda_i s$, with $0\ne\lambda_i\in\FF$, for all $i=1,\ldots,n$, and
 $a^2 = \alpha_1 u_1 + \cdots + \alpha_n u_n + \alpha s$ for $\alpha_1,
\ldots, \alpha_n,\alpha\in\FF$,  and $\alpha_i \neq 0$ for at least one $ i = 1, \ldots, n$. Then $\{a,u_1, \ldots, u_{i-1}, u_i + \alpha_i^{-1}\alpha s, u_{i+1}, \ldots, u_n, s\}$ is another natural basis. Hence we may assume that $a^2 = \alpha_1 u_1 + \cdots + \alpha_n u_n $.

Let $\cU = \FF u_1 \oplus  \cdots \oplus \FF u_n$, 
$u = \alpha_1 u_1 + \cdots + \alpha_n u_n$, and $b: \cU \times \cU \longrightarrow \FF$ the nondegenerate, 
symmetric, bilinear form given by $b(u_i, u_j) = 0 $ for $ i \neq j$, $b(u_i, u_i) =\lambda_i$ for  $ i = 1, \ldots, n$.

The linear map $\varphi: \cE \longrightarrow \cE(\cU,b,u) $ such that 
$a \mapsto (1,0,0), \; u_i \mapsto (0, u_i,0), \; i = 1, \ldots, n, \; s \mapsto (0,0,1)$ is then 
an isomorphism of algebras. 

If $\phi:  \cE(\cU,b,u) \longrightarrow \cE(\cU',b',u')$ is an isomorphism of algebras, then since 
$\ann(\ann^{2}(\cE(\cU,b,u))) = \FF \times 0 \times \FF$, it follows that 
$\phi((1,0,0)) = (\mu,0,\nu )$ for some $\mu,\nu  \in \FF, \mu \neq 0$. Also $\phi$ restricts to an isomorphism 
$ \ann^{2}(\cE(\cU,b,u))\longrightarrow \ann^{2}(\cE(\cU',b',u'))$,
and these latter algebras are of type $[1,n]$. 
Hence there exists a similarity $\psi: (\cU,b) \longrightarrow (\cU',b')$ and a linear form 
$\chi: \cU \longrightarrow \FF$ such that 
$\phi(0,x,0) = (0, \psi(x), \chi(x))$ for all $x \in \cU$. Then 
\[
\phi((0,u,0))= \phi((1,0,0)^2) = \phi((1,0,0))^2 = (\mu, 0, \nu)^2 = (0, \mu^2u',0),
\]
so $\psi(u) = \mu^2 u'$. The similarity $\bar{\psi} = \mu^{-2}\psi $ satisfies $\bar{\psi}(u) = u'$. 

Conversely, if $\psi: (\cU,b) \longrightarrow (\cU',b')$ is a similarity with $\psi(u) =u'$, then 
the linear map $\phi:  \cE(\cU,b,u) \longrightarrow \cE(\cU',b',u')$, 
$(\alpha,x,\beta) \mapsto (\alpha, \psi(x), \mu \beta)$, where $\mu$ is the norm of $\psi$, is 
an isomorphism of algebras.
\end{proof}

\begin{corollary}\label{cor:type[1,n1]}
If $\FF$ is algebraically closed and $n \geq 2$, there are, up to isomorphism, exactly 
two nilpotent evolution algebras of type $[1,n,1], $ namely 
$$ 
\cE\bigl(\FF^n, b_n, (1,0, \cdots,0)\bigr)  \ \text{and}\;  \  \cE\bigl(\FF^n, b_n, (1,i,0 \ldots,0)\bigr)
$$
where $b_n((\alpha_1, \ldots,\alpha_n),(\beta_1, \ldots,\beta_n))
=\alpha_1\beta_1 +\cdots +\alpha_n \beta_n$, and $i$ is a fixed square root of $-1$.
 \end{corollary}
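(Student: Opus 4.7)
The plan is to combine Theorem~\ref{teo:onetype} with the fact that over an algebraically closed $\FF$ any two nondegenerate symmetric bilinear forms of the same dimension are equivalent. Hence we may fix $(\cU,b)=(\FF^n,b_n)$, and the classification reduces to describing the orbits of nonzero vectors $u\in\FF^n$ under the similarity group $\mathrm{Sim}(\FF^n,b_n)$. Because $\FF$ is algebraically closed, every nonzero scalar is a square, so any similarity can be written as $\lambda\phi$ with $\lambda\in\FF^*$ and $\phi\in O(\FF^n,b_n)$; in particular $\mathrm{Sim}(\FF^n,b_n)=\FF^*\cdot O(\FF^n,b_n)$.

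The key invariant of the orbit of $u$ under this group is whether $b_n(u,u)$ vanishes, since $b_n(\lambda\phi(u),\lambda\phi(u))=\lambda^2 b_n(u,u)$ and $\lambda^2$ ranges over $\FF^*$. Next I would show there are exactly two orbits:
\begin{romanenumerate}
\item If $b_n(u,u)\neq 0$, pick $\lambda\in\FF^*$ with $\lambda^2=b_n(u,u)^{-1}$. Then $\lambda u$ is a unit vector, and Witt's extension theorem applied to the isometry $\FF(\lambda u)\to\FF(1,0,\ldots,0)$, $\lambda u\mapsto (1,0,\ldots,0)$, produces $\phi\in O(\FF^n,b_n)$ sending $\lambda u$ to $(1,0,\ldots,0)$. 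Hence $u$ lies in the orbit of $(1,0,\ldots,0)$.
\item If $b_n(u,u)=0$, note that $(1,i,0,\ldots,0)$ is isotropic (this uses $n\geq 2$). Again by Witt's extension theorem applied to the $1$-dimensional isotropic subspaces $\FF u$ and $\FF(1,i,0,\ldots,0)$, there is an element of $O(\FF^n,b_n)$ sending $u$ to $(1,i,0,\ldots,0)$.
\end{romanenumerate}
These two orbits are distinct because $b_n(u,u)=0$ is preserved under similarities, so the two algebras are non-isomorphic by Theorem~\ref{teo:onetype}.

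Finally, by Theorem~\ref{teo:onetype} every nilpotent evolution algebra of type $[1,n,1]$ is isomorphic to $\cE(\FF^n,b_n,u)$ for a unique similarity orbit of $u$, and the two orbits just described give precisely the two algebras in the statement. The only nontrivial step is the appeal to Witt's extension theorem to move one unit (respectively one isotropic) vector to another; this is standard for nondegenerate quadratic forms over a field of characteristic not two, which is our standing hypothesis. No further calculation is required.
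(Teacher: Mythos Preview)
Your proof is correct and follows essentially the same route as the paper: reduce to $(\FF^n,b_n)$ using equivalence of nondegenerate symmetric forms over an algebraically closed field, then invoke Theorem~\ref{teo:onetype} so that the classification becomes the orbit decomposition of nonzero vectors under the similarity group, which splits into the nonisotropic and the isotropic orbit. The only difference is that the paper simply asserts this two-orbit description, whereas you justify it via Witt's extension theorem; this is a welcome elaboration rather than a different argument.
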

 \begin{proof}
 Any pair $(\cU,b)$ where 
$\dim_{\FF}(\cU) = n $ and $b$ is a nondegenerate, symmetric, bilinear form on $\cU$, is isometric to
$(\FF^n, b_n)$.
 Now, any nonisotropic vector in $\FF^n$ lies in the orbit of $(1,0, \ldots,0)$ under the group of similarities 
 of $(\FF^n, b_n)$, while any nonzero isotropic vector lies in the orbit of 
 $(1,i,0 \ldots,0)$. The result then follows from the previous Theorem.
   \end{proof}

\bigskip
\section{Classification of nilpotent evolution algebras up to dimension four}\label{se:classificationdimfour}


By Theorem \ref{teo:Krull-Schmidt Theorem} it is enough to classify indecomposable algebras. Up  
to dimension four, everything follows from the previous results easily.  The algebras in the classification 
list will be given by describing their weighted  graphs $\Gamma^w(\cE, B) $ in a suitable natural basis. 
If no weight is assigned to an
edge of this graph, it will be understood that the weight is $1$.

   \begin{theorem}\label{teo:indecomposabledimfour}
   Let $\cE$ be an indecomposable nilpotent evolution algebra, of dimension at most four,  
   over an algebraically closed field of characteristic not two. Then $\cE$ is isomorphic to one and only one of the algebras whose graphs are in Tables \ref{tb:dim3} and \ref{tb:dim4}.
\end{theorem}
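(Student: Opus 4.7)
The plan is to stratify the classification by the type $[n_1,\ldots,n_r]$ introduced in Section~\ref{se: upperseries} and to feed each admissible case into the explicit machinery of Section~\ref{se: somefamilies-nilpotent_evol}.

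As a first reduction, I would invoke Corollary~\ref{cor:dimensionann}: any algebra with $n_1\geq \tfrac{1}{2}\dim_\FF(\cE)$ is decomposable, so only types with $n_1=1$ can contribute. For $\dim_\FF(\cE)\leq 4$ this leaves, besides the obvious one-dimensional algebra and the two-dimensional chain $e_1^2=e_2$, exactly the types $[1,2]$ and $[1,1,1]$ in dimension three, and $[1,3]$, $[1,1,2]$, $[1,2,1]$ and $[1,1,1,1]$ in dimension four.

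For each of these admissible types I would then apply the relevant structure theorem from Section~\ref{se: somefamilies-nilpotent_evol}. Theorem~\ref{teo:threetypes} shows that every algebra of type $[1,n]$, $[1,1,n]$ or $[1,1,1,n]$ is of the form $\cE(\cU,b)$, $\cE(\cU,b,g)$, or $\cE(\cU,b,f,g)$, respectively, with the isomorphism classes parametrized by orbits of diagonalizable, symmetric, commuting endomorphisms under the group generated by homotheties and shifts by $\id$. Since $\FF$ is algebraically closed of characteristic not two, the pair $(\cU,b)$ is unique up to similarity and every nonzero scalar is the norm of a homothety; the resulting orbit computations are elementary and yield one algebra in each of the types $[1,2]$, $[1,3]$ and $[1,1,1]$, two algebras in type $[1,1,2]$ (according to whether the spectrum of $g$ is simple or repeated), and two algebras in type $[1,1,1,1]$ (according to whether $f=0$ or $f\neq 0$, after shifting $g$ to zero). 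The type $[1,2,1]$ is handled directly by Corollary~\ref{cor:type[1,n1]}, which yields the two representatives distinguished by whether the marked vector $u$ is isotropic or not.

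Finally I would verify indecomposability and pairwise non-isomorphism. For every algebra on the list, $\dim_\FF\ann(\cE)=1$, so a decomposition $\cE=\cI\oplus\cJ$ into nonzero ideals would force $\ann(\cE)$ to lie entirely in one summand, leaving the other as a nonzero nilpotent ideal with zero annihilator — impossible. Algebras of different types are obviously non-isomorphic, and within each type the invariants isolated in Theorems~\ref{teo:threetypes} and~\ref{teo:onetype} separate the remaining candidates. The main obstacle I anticipate is not the structure theory itself, which is already in place, but the bookkeeping required to carry out the orbit calculations correctly and to translate the normalized algebras into the weighted graphs of Tables~\ref{tb:dim3} and~\ref{tb:dim4}; any case missed at this step is precisely the kind of omission that has led to the incompleteness in \cite{HA1}.
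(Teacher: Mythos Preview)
Your proposal is correct and follows essentially the same strategy as the paper: reduce to $n_1=1$ via Corollary~\ref{cor:dimensionann}, then feed each admissible type into Theorem~\ref{teo:threetypes} (for $[1,n]$, $[1,1,n]$, $[1,1,1,n]$) and Theorem~\ref{teo:onetype}/Corollary~\ref{cor:type[1,n1]} (for $[1,n,1]$). Your orbit counts in each type are right, and your indecomposability argument (one-dimensional annihilator forces one summand to be a nonzero nilpotent ideal with trivial annihilator) is a clean addition that the paper leaves implicit.
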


\begin{table}[!ht]
\begin{center}
 \begin{tabular}{|c|c|c|}
  \hline
   \null\ $\dim\cE$\ \null&\null\  Type of $\cE$ \ \null
&\null\qquad Graph\qquad\null\\
\hline
\hline & & \\[-8pt]  
1 & [1] & \begin{minipage}[m]{70pt}{\[
\begin{tikzpicture}[x=1.5pt, y=1.5pt]

  \tikzset{vertex/.style = {shape=circle,draw, inner sep=0pt, minimum size=3pt}}
  \tikzset{edge/.style = {->,> = latex'}}
  \node[vertex] (a) at  (0,0) {};
  
  \end{tikzpicture}
   \]\vspace{-8pt}}
 \end{minipage}
  \\
\hline  &&\\[-8pt]  
2 & [1,1] & \begin{minipage}[m]{70pt}{\[
\begin{tikzpicture}[x=2pt, y=2pt]

 \tikzset{vertex/.style = {shape=circle,draw, inner sep=0pt, minimum size=3pt}}
 \tikzset{edge/.style = {->,> = latex'}}
 \node[vertex] (a) at  (0,10) {};
 \node[vertex] (b) at  (0,0) {};

 \draw[edge] (a) to (b);

 \end{tikzpicture}
 \]\vspace{-8pt}}
 \end{minipage}
  \\ 
  \hline  && \\[-8pt]  
3 & [1,2] & \begin{minipage}[m]{70pt}{\[
\begin{tikzpicture}[x=1.5pt, y=1.5pt]

 \tikzset{vertex/.style = {shape=circle,draw, inner sep=0pt, minimum size=3pt}}
 \tikzset{edge/.style = {->,> = latex'}}
 \node[vertex] (a) at  (-6,10) {};
 \node[vertex] (b) at  (6,10) {};
 \node[vertex] (c) at  (0,0) {};

 \draw[edge] (a) to (c);
 \draw[edge] (b) to (c);

 \end{tikzpicture}
\]\vspace{-8pt}}
 \end{minipage}
  \\ 
 \hline  && \\[-8pt]  
3 & [1,1,1] & \begin{minipage}[m]{70pt}{\[
 \begin{tikzpicture}[x=1.5pt, y=1.5pt]

 \tikzset{vertex/.style = {shape=circle,draw, inner sep=0pt, minimum size=3pt}}
 \tikzset{edge/.style = {->,> = latex'}}
 \node[vertex] (a) at  (0,20) {};
 \node[vertex] (b) at  (0,10) {};
 \node[vertex] (c) at  (0,0) {};
 
 \draw[edge] (a) to (b);
 \draw[edge] (b) to (c);

 \end{tikzpicture}
\]\vspace{-8pt}}
 \end{minipage}
  \\
 \hline
 \end{tabular}
\end{center}
\caption{\vrule width 0pt height 12pt$\dim\cE\leq 3$}
\label{tb:dim3}
\end{table}
 
 \begin{table}[!ht]
\begin{center}
 \begin{tabular}{|c|c|c|c|}
  \hline
   \null\ $\dim\cE$\ \null&\null\  Type of $\cE$ \ \null
&\null\qquad Graph\qquad\null&\null\qquad Properties\qquad\null\\
\hline
 \hline  &&& \\[-8pt]  
4 & [1,3] & \begin{minipage}[m]{70pt}{\[
\begin{tikzpicture}[x=1.5pt, y=1.5pt]

 \tikzset{vertex/.style = {shape=circle,draw, inner sep=0pt, minimum size=3pt}}
 \tikzset{edge/.style = {->,> = latex'}}
 \node[vertex] (a) at  (-10,10) {};
 \node[vertex] (b) at  (0,10) {};
 \node[vertex] (c) at  (10,10) {};
 \node[vertex] (d) at  (0,0) {};

 \draw[edge] (a) to (d);
 \draw[edge] (b) to (d);
 \draw[edge] (c) to (d);

 \end{tikzpicture}
\]\vspace{-8pt}}
 \end{minipage}
 & \\
  \hline  &&& \\[-8pt]  
4 & [1,2,1] & \begin{minipage}[m]{70pt}{\[
\begin{tikzpicture}[x=1.5pt, y=1.5pt]

 \tikzset{vertex/.style = {shape=circle,draw, inner sep=0pt, minimum size=3pt}}
 \tikzset{edge/.style = {->,> = latex'}}
 \node[vertex] (a) at  (-10,20) {};
 \node[vertex] (b) at  (-10,10) {};
 \node[vertex] (c) at  (10,10) {};
 \node[vertex] (d) at  (0,0) {};

 \draw[edge] (a) to (b);
 \draw[edge] (b) to (d);
 \draw[edge] (c) to (d);

 \end{tikzpicture}
\]\vspace{-8pt}}
 \end{minipage}
 &$((\cU_3\oplus\cU_1)^2)^2\ne 0$ \\ 
 \hline  &&& \\[-8pt]  
4 & [1,2,1] & \begin{minipage}[m]{70pt}{\[
\begin{tikzpicture}[x=1.5pt, y=1.5pt]

\tikzset{vertex/.style = {shape=circle,draw, inner sep=0pt, minimum size=3pt}}
\tikzset{edge/.style = {->,> = latex'}}
\node[vertex] (a) at  (0,20) {};
\node[vertex] (b) at  (-10,10) {};
\node[vertex] (c) at  (10,10) {};
\node[vertex] (d) at  (0,0) {};

\draw[edge] (a) to (b);
\draw[edge] (a) to node[near start,right]{\tiny$i$} (c);
\draw[edge] (b) to (d);
\draw[edge] (c) to (d);

\end{tikzpicture}
 \]\vspace{-8pt}}
 \end{minipage}
 &$((\cU_3\oplus\cU_1)^2)^2 = 0$ \\ 
  \hline  &&& \\[-8pt]  
4 & [1,1,2] & \begin{minipage}[m]{70pt}{\[
\begin{tikzpicture}[x=1.5pt, y=1.5pt]

\tikzset{vertex/.style = {shape=circle,draw, inner sep=0pt, minimum size=3pt}}
\tikzset{edge/.style = {->,> = latex'}}
\node[vertex] (a) at  (-6,20) {};
\node[vertex] (b) at  (6,20) {};
\node[vertex] (c) at  (0,10) {};
\node[vertex] (d) at  (0,0) {};

\draw[edge] (a) to (c);
\draw[edge] (b) to (c);
\draw[edge] (c) to (d);

\end{tikzpicture}
\]\vspace{-8pt}}
 \end{minipage}
 &$\dim(\cU_3\oplus\cU_1)^2 = 1$ \\ 
\hline  &&& \\[-8pt]  
4 & [1,1,2] & \begin{minipage}[m]{70pt}{\[
\begin{tikzpicture}[x=1.5pt, y=1.5pt]

\tikzset{vertex/.style = {shape=circle,draw, inner sep=0pt, minimum size=3pt}}
\tikzset{edge/.style = {->,> = latex'}}
\node[vertex] (a) at  (-6,20) {};
\node[vertex] (b) at  (6,20) {};
\node[vertex] (c) at  (0,10) {};
\node[vertex] (d) at  (0,0) {};

\draw[edge] (a) to (c);
\draw[edge] (b) to (c);
\draw[edge] (c) to (d);
\draw[edge] (b) to[bend left] (d);

\end{tikzpicture}
\]\vspace{-8pt}}
 \end{minipage}
 &$\dim(\cU_3\oplus\cU_1)^2 = 2$ \\ 
 \hline  &&& \\[-8pt]  
4 & [1,1,1,1] & \begin{minipage}[m]{70pt}{\[
\begin{tikzpicture}[x=1.5pt, y=1.5pt]

\tikzset{vertex/.style = {shape=circle,draw, inner sep=0pt, minimum size=3pt}}
\tikzset{edge/.style = {->,> = latex'}}
\node[vertex] (a) at  (0,30) {};
\node[vertex] (b) at  (0,20) {};
\node[vertex] (c) at  (0,10) {};
\node[vertex] (d) at  (0,0) {};

\draw[edge] (a) to (b);
\draw[edge] (b) to (c);
\draw[edge] (c) to (d);

\end{tikzpicture}
 \]\vspace{-8pt}}
 \end{minipage}
 & $(\cU_4\oplus\cU_1)^2 \subseteq\cU_3\oplus\cU_1 $\\ 
 \hline  &&& \\[-8pt]  
4 & [1,1,1,1] & \begin{minipage}[m]{70pt}{\[
 \begin{tikzpicture}[x=1.5pt, y=1.5pt]

\tikzset{vertex/.style = {shape=circle,draw, inner sep=0pt, minimum size=3pt}}
\tikzset{edge/.style = {->,> = latex'}}
\node[vertex] (a) at  (0,30) {};
\node[vertex] (b) at  (0,20) {};
\node[vertex] (c) at  (0,10) {};
\node[vertex] (d) at  (0,0) {};

\draw[edge] (a) to (b);
\draw[edge] (a) to[bend left=40] (c);
\draw[edge] (b) to (c);
\draw[edge] (c) to (d);

\end{tikzpicture}
 \]\vspace{-8pt}}
 \end{minipage}
 & $(\cU_4\oplus\cU_1)^2 \nsubseteq\cU_3\oplus\cU_1 $\\ 
   \hline
 \end{tabular}
\end{center}
\caption{\vrule width 0pt height 12pt$\dim\cE= 4$}
\label{tb:dim4}
\end{table}


 \begin{proof}
 If $\dim(\cE) = 1$ this is trivial. For dimension $2$ or $3$, by Corollary \ref{cor:dimensionann}, 
 we have $\dim(\ann(\cE)) = 1$  (otherwise $\cE$ would be decomposable), so the possible types are 
$[1,1], [1,2]$ or $[1,1,1]$, and the result follows from Theorem \ref{teo:threetypes}. 
If  $\dim(\cE)= 4$, then $\dim(\ann(\cE))= 1$ by Corollary 
\ref{cor:dimensionann}. The possible types are (ordered lexicographically) 
$[1,3], [1,2,1], [1,1,2]$ and $[1,1,1,1]$, and the result follows from Theorems \ref{teo:threetypes} and  \ref{teo:onetype}.
   \end{proof}

   \begin{remark}
   
The algebras with graphs
\[
\begin{tikzpicture}[x=1.5pt, y=1.5pt]

\tikzset{vertex/.style = {shape=circle,draw, inner sep=0pt, minimum size=3pt}}
\tikzset{edge/.style = {->,> = latex'}}
\node[vertex] (a) at  (0,20) {};
\node[vertex] (b) at  (-10,10) {};
\node[vertex] (c) at  (10,10) {};
\node[vertex] (d) at  (0,0) {};

\draw[edge] (a) to (b);
\draw[edge] (a) to node[near start,right]{\tiny$i$} (c);
\draw[edge] (b) to (d);
\draw[edge] (c) to (d);

\end{tikzpicture}
\quad\raisebox{15pt}{\text{and}}\quad
 \begin{tikzpicture}[x=1.5pt, y=1.5pt]

\tikzset{vertex/.style = {shape=circle,draw, inner sep=0pt, minimum size=3pt}}
\tikzset{edge/.style = {->,> = latex'}}
\node[vertex] (a) at  (0,30) {};
\node[vertex] (b) at  (0,20) {};
\node[vertex] (c) at  (0,10) {};
\node[vertex] (d) at  (0,0) {};

\draw[edge] (a) to (b);
\draw[edge] (a) to[bend left=40] (c);
\draw[edge] (b) to (c);
\draw[edge] (c) to (d);

\end{tikzpicture}
\]
are missing in   \cite{HA1}.
   
   \end{remark}
 

\bigskip
  
   
\section{Classification of five-dimensional nilpotent evolution algebras}
\label{se:classificationdimfive}

First we will classify the indecomposable, five-dimensional, nilpotent, evolution algebras $\cE$ with $\dim(\ann(\cE)) > 1$.

 \begin{theorem}
   Let $\cE$ be an indecomposable, five-dimensional,  nilpotent, evolution algebra,  over an algebraically closed field of characteristic not two. Then:
   \begin{romanenumerate}
    \item The dimension of $\ann(\cE) $ is $1$ or $2$.
    \item If the dimension of $\ann(\cE) $ is $2$, then $\cE$ is isomorphic to one and only one of the algebras with graph in Table \ref{tb:ann2}.
     \end{romanenumerate}
\end{theorem}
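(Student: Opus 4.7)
Part (i) follows at once from Corollary \ref{cor:dimensionann}: since $\cE$ is nilpotent and nonzero, $\ann(\cE)\neq 0$, and if $\dim\ann(\cE)\geq 3$ then $\dim\ann(\cE)\geq\tfrac{1}{2}\dim\cE$, forcing $\cE$ to be decomposable. Hence $\dim\ann(\cE)\in\{1,2\}$.

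For part (ii), the possible types of $\cE$ with $n_1=2$ and $n_1+\cdots+n_r=5$ are $[2,3]$, $[2,2,1]$, $[2,1,2]$ and $[2,1,1,1]$. I would treat each in turn, fixing a natural basis $B=B_1\cup\cdots\cup B_r$ adapted to the upper annihilating series (as in the discussion preceding Proposition \ref{pr:UiU1}) and recording the algebra by the squares $e_i^2$ for $i\geq 3$. The reductions combine three tools: Corollary \ref{cor:algebracuadann}, which forces $\ann(\cE)\subseteq\cE^2$; the freedoms from Corollary \ref{cor:basechange}, namely the $GL(\ann(\cE))$ action on $\cU_1$, rescalings and permutations of the $e_i$'s inside each $B_i$, absorption of higher-block components into $\cU_1$, and non-diagonal changes within a single $\cU_i$ subject to the natural-basis orthogonality; and the existence of square roots in the algebraically closed field $\FF$.

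A brief sketch per type. Type $[2,1,1,1]$ contributes nothing, since a direct computation shows $\cE^2\cap\ann(\cE)=\FF e_3^2$ is one-dimensional, violating $\ann(\cE)\subseteq\cE^2$. Type $[2,3]$ reduces to three nonzero vectors $v_3,v_4,v_5\in\ann(\cE)\cong\FF^2$ which must span $\ann(\cE)$ and represent three pairwise distinct lines in $\mathbb{P}^1(\FF)$ (two parallel squares produce a direct summand), and $3$-transitivity of $PGL_2$ on $\mathbb{P}^1(\FF)$ then gives a single isomorphism class. Type $[2,1,2]$ is normalized by rescaling $e_4,e_5$ so that $e_4^2=e_3+v_4$ and $e_5^2=e_3+v_5$ with $v_4,v_5\in\ann(\cE)$, and then by absorbing $\tfrac{1}{2}(v_4+v_5)$ into $e_3$ (which leaves $e_3^2$ unchanged) to reach $e_4^2=e_3+u$, $e_5^2=e_3-u$; indecomposability forces $\{e_3^2,u\}$ to span $\ann(\cE)$, yielding a single isomorphism class. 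Type $[2,2,1]$ branches on the linear (in)dependence of $e_3^2,e_4^2$ in $\ann(\cE)$: in the dependent case the same computation yields $\cE^2\cap\ann(\cE)=\FF e_3^2$ and hence decomposability, while in the independent case $e_5^2\equiv\alpha e_3+\beta e_4\pmod{\ann(\cE)}$ must have both $\alpha$ and $\beta$ nonzero (otherwise a direct summand of type $[1,1,1]$ splits off), and rescaling produces the single canonical form $e_3^2=e_1$, $e_4^2=e_2$, $e_5^2=e_3+e_4$.

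The main obstacle, flagged by the authors in the introduction, is the natural-basis constraint on non-diagonal changes within a single $\cU_i$: a transformation $(e_k,e_l)\mapsto(Ae_k+Be_l,Ce_k+De_l)$ remains a natural basis only when $AC\,e_k^2+BD\,e_l^2=0$, a quadratic condition whose solvability depends on the linear (in)dependence of $e_k^2$ and $e_l^2$. This is what forces the branching on dependence patterns above, and is precisely the subtle point missed in \cite{HA2}. Uniqueness of the list in Table \ref{tb:ann2} follows from the fact that isomorphisms preserve all the invariants invoked — the type (Proposition \ref{pr:UiU1}), $\dim\cE^2$, $\dim(\cE^2\cap\ann(\cE))$, and similar quadratic invariants — so the listed algebras are pairwise non-isomorphic.
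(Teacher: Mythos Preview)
Your sketch follows the paper's own case-by-case strategy (split by type, force $\ann(\cE)\subseteq\cE^2$ via Corollary~\ref{cor:algebracuadann}, then normalize using the freedoms of Corollary~\ref{cor:basechange}) and is essentially correct; your disposal of type $[2,1,1,1]$ through $\dim\bigl(\cE^2\cap\ann(\cE)\bigr)=1$ is in fact more direct than the paper's, which instead builds the four-dimensional subalgebra $\cS=\espan{x,x^{[2]},x^{[3]},x^{[4]}}$ and invokes Lemma~\ref{lem:subalgebraann}, and your use of $3$-transitivity of $PGL_2$ on $\mathbb{P}^1$ for type $[2,3]$ is a clean conceptual substitute for the explicit rescaling the paper carries out. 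The one genuine omission is that you never check that the three surviving canonical algebras are themselves indecomposable: the paper does this explicitly in each case (for $[2,3]$ by observing that any two non-annihilator elements of a natural basis have linearly independent squares, for $[2,2,1]$ and $[2,1,2]$ by comparing invariants with the unique decomposable algebra of the same type), and without it your argument shows only that every indecomposable $\cE$ lands in the list, not that each listed algebra actually arises as such.
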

  

\begin{table}[!ht]
\begin{center}
 \begin{tabular}{|c|c|}
  \hline
  \null\qquad Type of $\cE$ \qquad\null&\null\qquad Graph\qquad\null\\
\hline
\hline &\\[-12pt]
  [2,3]& \begin{minipage}[m]{70pt}{
  \[
 \begin{tikzpicture}[x=1.5pt, y=1.5pt]
 \tikzset{vertex/.style = {shape=circle,draw, inner sep=0pt, minimum size=3pt}}
 \tikzset{edge/.style = {->,> = latex'}}
 \node[vertex] (a) at  (-10,10) {};
 \node[vertex] (b) at  (10,10) {};
 \node[vertex] (c) at  (0,0) {};
\node[vertex] (d) at  (20,0) {};
\node[vertex] (e) at  (30,10) {};

 \draw[edge] (a) to (c);
 \draw[edge] (b) to (c);
  \draw[edge] (b) to (d);
   \draw[edge] (e) to (d);

 \end{tikzpicture}
 \]\vspace{-8pt}}
 \end{minipage}\\
  \hline&\\[-12pt]
   [2,2,1]&\begin{minipage}[m]{70pt}{
   \[
 \begin{tikzpicture}[x=1.5pt, y=1.5pt]
 \tikzset{vertex/.style = {shape=circle,draw, inner sep=0pt, minimum size=3pt}}
 \tikzset{edge/.style = {->,> = latex'}}
 \node[vertex] (a) at  (10,10) {};
 \node[vertex] (b) at  (0,0) {};
\node[vertex] (c) at  (20,0) {};
\node[vertex] (d) at  (0,-10) {};
\node[vertex] (e) at  (20,-10) {};

 \draw[edge] (a) to (b);
  \draw[edge] (a) to (c);
  \draw[edge] (b) to (d);
  \draw[edge] (c) to (e);

 \end{tikzpicture}
 \]\vspace{-8pt}}
 \end{minipage}\\
   \hline&\\[-12pt]
  [2,1,2]&\begin{minipage}[m]{70pt}{
  \[
 \begin{tikzpicture}[x=1.5pt, y=1.5pt]
 \tikzset{vertex/.style = {shape=circle,draw, inner sep=0pt, minimum size=3pt}}
 \tikzset{edge/.style = {->,> = latex'}}
 \node[vertex] (a) at  (-10,10) {};
 \node[vertex] (b) at  (10,10) {};
 \node[vertex] (c) at  (0,0) {};
\node[vertex] (d) at  (0,-10) {};
\node[vertex] (e) at  (20,-10) {};
 \draw[edge] (a) to (c);
 \draw[edge] (b) to (c);
  \draw[edge] (c) to (d);
   \draw[edge] (b) to (e);

 \end{tikzpicture}
 \]\vspace{-8pt}}
 \end{minipage}\\
  \hline 
 \end{tabular}
\end{center}
\caption{\vrule width 0pt height 12pt$\dim\cE=5$, $\dim(\ann(\cE))=2$}
\label{tb:ann2}
\end{table}

 \begin{proof}
 The first assertion follows from Corollary \ref{cor:dimensionann}. Assume that $\cE$  is an indecomposable, 
 five-dimensional,  nilpotent, evolution algebra,  with two-dimensional annihilator. 
 The possible types of $\cE$, ordered lexicographically, are $[2,3]$, $[2,2,1]$, $[2,1,2]$, and $ [2,1,1,1]$.
 
 \begin{itemize}
 
 \medskip
 
  \item  If the type is $[2,3]$, let $\{x,y,z,u,v\}$ be a natural basis with $u^2 = v^2 = 0$ (that is, 
 $\ann(\cE) = \espan{u,v})$. Besides, by
 Corollary  \ref{cor:algebracuadann}, $\ann(\cE) = \cE^2  = \espan{x^2, y^2, z^2}$ and hence we may assume that $x^2$ and $z^2$ are linearly independent. Then 
 $\{x,y,z,u' = x^2,v' = z^2 \}$ is another   natural basis and $y^2 = \alpha u' + \beta  v'$ for 
 $\alpha, \beta  \in \FF$, $(\alpha, \beta) \neq (0,0)$ because
 $y \notin \ann(\cE)$.  If $\alpha  = 0$, then  $\cE  = \espan{x, u'} \oplus \espan{y,z, v'}$ 
 would be decomposable. The same happens if $\beta 
 = 0$. 
 Hence $\alpha \neq 0 \neq \beta $ and the graph in the natural basis $\{\sqrt{\alpha }x,y,\sqrt{\beta }z$, 
 $\alpha u',\beta v'\}$ is 
\[
 \begin{tikzpicture}[x=1.5pt, y=1.5pt]
 \tikzset{vertex/.style = {shape=circle,draw, inner sep=0pt, minimum size=3pt}}
 \tikzset{edge/.style = {->,> = latex'}}
 \node[vertex] (a) at  (-10,10) {};
 \node[vertex] (b) at  (10,10) {};
 \node[vertex] (c) at  (0,0) {};
\node[vertex] (d) at  (20,0) {};
\node[vertex] (e) at  (30,10) {};

 \draw[edge] (a) to (c);
 \draw[edge] (b) to (c);
  \draw[edge] (b) to (d);
   \draw[edge] (e) to (d);

 \end{tikzpicture}
\]
Notice that for any $p \neq q$ in a natural basis with   $p,q  \notin \ann(\cE)$, $p^2$ and $ q^2$ are linearly independent, so $\cE$ is not a direct sum of ideals with graphs
\[
 \begin{tikzpicture}[x=1.5pt, y=1.5pt]
 \tikzset{vertex/.style = {shape=circle,draw, inner sep=0pt, minimum size=3pt}}
 \tikzset{edge/.style = {->,> = latex'}}
 \node[vertex] (a) at  (-10,10) {};
 \node[vertex] (b) at  (10,10) {};
 \node[vertex] (c) at  (0,0) {};

 \draw[edge] (a) to (c);
 \draw[edge] (b) to (c);

 \end{tikzpicture}
 \quad\raisebox{8pt}{\text{and}}\quad
  \begin{tikzpicture}[x=1.5pt, y=1.5pt]

 \tikzset{vertex/.style = {shape=circle,draw, inner sep=0pt, minimum size=3pt}}
 \tikzset{edge/.style = {->,> = latex'}}
 \node[vertex] (a) at  (0,10) {};
 \node[vertex] (b) at  (0,0) {};

 \draw[edge] (a) to (b);

 \end{tikzpicture}
\]
It follows that $\cE$ is indeed indecomposable.
    
\medskip
    \item If the type is $[2,2,1]$, and $\{x,a,b,u,v\}$ is a natural basis with  $\ann(\cE)= 
    \espan{u,v}$ and $\ann^2(\cE)= \espan{a,b,u,v}$,
    then $x^2 = \alpha a + \beta b + p$, $a^2 = q$, $b^2 = r$, with $\alpha, \beta  \in \FF$, 
    $(\alpha, \beta) \neq (0,0)$, $p, q, r \in \ann(\cE)$.
    Since $\ann(\cE) $ is contained in $\cE^2$ because of Corollary \ref{cor:algebracuadann}, and 
    $\cE^2 =\espan{x^2,a^2, b^2}$, it follows that $a^2$ and $ b^2$ are linearly independent. Assume, without loss of generality, that $\alpha \neq 0$.
    
    If $\beta = 0$, then $\cE = \espan{x,   \alpha a + p, a^2} \oplus \espan{b, b^2}$ would be decomposable.
   Hence $\beta \neq 0$ and the graph in the natural basis $\{x, \alpha a + p, \beta b, \alpha^2 a^2, \beta^2 b^2 \}$ is
\[
 \begin{tikzpicture}[x=1.5pt, y=1.5pt]
 \tikzset{vertex/.style = {shape=circle,draw, inner sep=0pt, minimum size=3pt}}
 \tikzset{edge/.style = {->,> = latex'}}
 \node[vertex] (a) at  (10,10) {};
 \node[vertex] (b) at  (0,0) {};
\node[vertex] (c) at  (20,0) {};
\node[vertex] (d) at  (0,-10) {};
\node[vertex] (e) at  (20,-10) {};

 \draw[edge] (a) to (b);
  \draw[edge] (a) to (c);
  \draw[edge] (b) to (d);
  \draw[edge] (c) to (e);

 \end{tikzpicture}
 \]
Observe that this algebra is indecomposable  because $\dim\bigl(\ann(\cU_3+\cU_1)^2\bigr)=3$, while this dimension is $4$ for the only five-dimensional decomposable nilpotent evolution algebra with the same type and same $\dim\cE^2$ (which equals $3$). 

\medskip
        
        \item  If the type is $[2,1,2]$, there is  a natural basis $\{x,y,a,u,v\}$ with 
        $\ann(\cE) = \espan{u,v}$, $\ann^2(\cE) = \espan{a,u,v}$. 
        As $x^2 \in \ann^2(\cE)\setminus \ann(\cE)$, $\{x,y,x^2,u,v\}$ is another natural basis. 
        Also $y^2 \in \ann^2(\cE) \setminus \ann(\cE)$, so after scaling $y$ we may 
assume $y^2 = x^2 + u'$ with $u' \in \ann(\cE)$. Then $\cE^2= \espan{x^{2}, u', (x^{2})^2}$ and $\cE^2$ contains $\ann(\cE)$ by indecomposability
    (Corollary \ref{cor:algebracuadann}).  Hence $u'$ and $  (x^{2})^2$ form a basis of $\ann(\cE)$. The graph in the natural basis $\{x,y,x^{2},(x^{2})^2, u'\}$
    is 
\[
 \begin{tikzpicture}[x=1.5pt, y=1.5pt]
 \tikzset{vertex/.style = {shape=circle,draw, inner sep=0pt, minimum size=3pt}}
 \tikzset{edge/.style = {->,> = latex'}}
 \node[vertex] (a) at  (-10,10) {};
 \node[vertex] (b) at  (10,10) {};
 \node[vertex] (c) at  (0,0) {};
\node[vertex] (d) at  (0,-10) {};
\node[vertex] (e) at  (20,-10) {};
 \draw[edge] (a) to (c);
 \draw[edge] (b) to (c);
  \draw[edge] (c) to (d);
   \draw[edge] (b) to (e);

 \end{tikzpicture}
 \]
Note that $\ann(\cE)$ is contained in $\cE^2$ and this is not valid for the only decomposable
nilpotent evolution algebra of type $[2,1,2]$.
        
\medskip
\item Finally,  if the type is $[2,1,1,1]$, and $\{x,y,z,u,v\}$ is  a natural basis with 
$\ann(\cE) = \espan{u,v}$, $\ann^2(\cE) = \espan{z,u,v}$, and $\ann^3(\cE) = \espan{y,z,u,v}$,
then 
\[
x^{[2]} \bydef x^2 = \alpha y +\beta z + p,
\] 
with $\alpha,\beta \in \FF$, $\alpha \neq 0$, 
  $p \in \ann(\cE)$,
\[
x^{[3]} \bydef (x^2)^2 = \alpha^2 y^2 + q = \mu z + q',
\] 
with  $ 0 \neq \mu \in \FF$ 
  and $q,q' \in \ann(\cE) $, and
\[
0 \neq x^{[4]} \bydef ( x^{[3]})^2 \in \ann(\cE).
\]
Thus $x x^{[r]} = 0$ for $ r=2,3,4$, $x^{[2]} x^{[3]} \in \FF z^2 = \FF  x^{[4]}$, and 
  hence $\cS = \espan{x,x^{[2]},x^{[3]}, x^{[4]}}$ is a subalgebra of $\cE$ with 
  $\cE = \cS + \ann(\cE)$, a contradiction   with Lemma \ref{lem:subalgebraann}.
Therefore, there are no indecomposable algebras with this type.
     \end{itemize}
   \end{proof}
   
\begin{remark}
Only the algebra of type $[2,1,2]$ appears in the classification in
  \cite{HA2}. It is denoted there as $\cE_{4,29}$. Also, the argument used in the proof above and Corollary \ref{cor:algebracuadann} show that a nilpotent evolution algebra of type $[n,1,m]$ is indecomposable if and only if $\ann(\cE)\subseteq \cE^2$.
\end{remark}

\begin{remark}
The argument in the proof above, to show that any nilpotent evolution algebra of type $[2,1,1,1]$ is decomposable, can be adjusted to prove that any nilpotent evolution algebra of dimension $n$ and type $[n_1,\ldots,n_r]$, with $r\geq 3$ and $2n_1>n-r+2$, is decomposable. Indeed, any $x\in\cE\setminus\ann^{r-1}(\cE)$ satisfies that the elements $x^{[2]},\ldots,x^{[r-1]}$ belong to $\cE^2$ and are linearly independent modulo $\ann(\cE)$. Hence, if $\cE$ were indecomposable, then $\ann(\cE)$ would be contained in $\cE^2$ (Corollary \ref{cor:algebracuadann}) and we would get:
\begin{multline*}
n_1+r-2=\dim_\FF\left(\ann(\cE)+\espan{x^{[2]},\ldots,x^{[r-1]}}\right)\leq \dim_\FF(\cE^2)\\
\leq \dim_\FF(\cE)-\dim_\FF\bigl(\ann(\cE)\bigr)=n-n_1,
\end{multline*}
so $2n_1\leq n-r+2$, a contradiction.
\end{remark}
 
 \medskip
 We are left with the classification of the five-dimensional, nilpotent, evolution algebras $\cE$, with $\dim(\ann(\cE)) = 1$. This condition already implies 
 $\cE$ to be indecomposable. The possible types of $\cE$, ordered lexicographically, are $[1,4]$, $[1,3,1]$, $[1,2,2]$, $[1,2,1,1]$, $[1,1,3]$, $[1,1,2,1]$, $[1,1,1,2]$, and $[1,1,1,1,1]$.
   
\bigskip
   We will classify first those algebras isomorphic to the algebras in  
   Theorems \ref{teo:threetypes} and \ref{teo:onetype}.

    \medskip
   In what follows, a dashed edge with weight $\alpha \in \FF$
\[
\begin{tikzpicture}[x=1.5pt, y=1.5pt]

\tikzset{vertex/.style = {shape=circle,draw, inner sep=0pt, minimum size=3pt}}
\tikzset{edge/.style = {->,> = latex'}}
\node[vertex] (a) at  (0,30) {};
\node[vertex] (c) at  (0,10) {};

\draw[edge,dashed] (a) to[bend right=40] node [left]{$\alpha$}(c);
\end{tikzpicture}
\]
will indicate that there is no such edge if $\alpha = 0$, and that the edge is an 
usual edge with weight $\alpha$, if  $\alpha \neq 0$. As before, if no weight is attached to an edge, it means that the weight is $1$.

\begin{theorem}
   Let $\cE$ be an indecomposable, five-dimensional,  nilpotent, evolution algebra,  over an algebraically closed field of characteristic not two.
  \begin{romanenumerate}
    \item If the type is $[1,4]$, then $\cE$  is isomorphic to the algebra with graph
    {\[
 \begin{tikzpicture}[x=1.5pt, y=1.5pt]

 \tikzset{vertex/.style = {shape=circle,draw, inner sep=0pt, minimum size=3pt}}
 \tikzset{edge/.style = {->,> = latex'}}
 \node[vertex] (a) at  (-15,10) {};
 \node[vertex] (b) at  (-5,10) {};
 \node[vertex] (c) at  (5,10) {};
  \node[vertex] (d) at  (15,10) {};
 \node[vertex] (e) at  (0,0) {};

 \draw[edge] (a) to (e);
 \draw[edge] (b) to (e);
 \draw[edge] (c) to (e);
  \draw[edge] (d) to (e);

 \end{tikzpicture}
\quad\raisebox{5pt}{\text{.}}
 \]}
    \item If the type is $[1,3,1]$, then $\cE$  is isomorphic to one and only one of the algebras with 
    the following  graphs
    {\[
 \vcenter{\hbox{%
 \begin{tikzpicture}[x=1.5pt, y=1.5pt]

 \tikzset{vertex/.style = {shape=circle,draw, inner sep=0pt, minimum size=3pt}}
 \tikzset{edge/.style = {->,> = latex'}}
  \node[vertex] (a) at  (-10,20) {};
 \node[vertex] (b) at  (-10,10) {};
 \node[vertex] (c) at  (0,10) {};
 \node[vertex] (d) at  (10,10) {};
 \node[vertex] (e) at  (0,0) {};

  \draw[edge] (a) to (b);
 \draw[edge] (b) to (e);
 \draw[edge] (c) to (e);
 \draw[edge] (d) to (e);
 \end{tikzpicture}}}
  \quad\text{,}\quad
 \vcenter{\hbox{%
  \begin{tikzpicture}[x=1.5pt, y=1.5pt]
\tikzset{vertex/.style = {shape=circle,draw, inner sep=0pt, minimum size=3pt}}
\tikzset{edge/.style = {->,> = latex'}}
\node[vertex] (a) at  (0,20) {};
\node[vertex] (b) at  (-10,10) {};
\node[vertex] (c) at  (10,10) {};
\node[vertex] (d) at  (0,0) {};
\node[vertex] (e) at  (20,10) {};

\draw[edge] (a) to (b);
\draw[edge] (a) to node[near start,right =-2pt]{\tiny $i$} (c);
\draw[edge] (b) to (d);
\draw[edge] (c) to (d);
\draw[edge] (e) to (d);
\end{tikzpicture}}}
\quad\text{.}
 \]}
\item If the type is $[1,1,3]$, then $\cE$  is isomorphic to one of the algebras with
the following  graphs
\[
 \vcenter{\hbox{%
 \begin{tikzpicture}[x=1.5pt, y=1.5pt]

 \tikzset{vertex/.style = {shape=circle,draw, inner sep=0pt, minimum size=3pt}}
 \tikzset{edge/.style = {->,> = latex'}}
 \node[vertex] (a) at  (-10,10) {};
 \node[vertex] (b) at  (0,10) {};
 \node[vertex] (c) at  (10,10) {};
 \node[vertex] (d) at  (0,0) {};
 \node[vertex] (e) at  (0,-10) {};
 \draw[edge] (a) to (d);
 \draw[edge] (b) to (d);
 \draw[edge] (c) to (d);
 \draw[edge] (d) to (e);
 \end{tikzpicture}}}
  \quad\text{,}\quad
 \vcenter{\hbox{%
 \begin{tikzpicture}[x=1.5pt, y=1.5pt]

 \tikzset{vertex/.style = {shape=circle,draw, inner sep=0pt, minimum size=3pt}}
 \tikzset{edge/.style = {->,> = latex'}}
 \node[vertex] (a) at  (-10,10) {};
 \node[vertex] (b) at  (0,10) {};
 \node[vertex] (c) at  (10,10) {};
 \node[vertex] (d) at  (0,0) {};
 \node[vertex] (e) at  (0,-10) {};
 \draw[edge] (a) to (d);
 \draw[edge] (b) to (d);
 \draw[edge] (c) to (d);
 \draw[edge] (d) to (e);
  \draw[edge] (c) to (e);
 \end{tikzpicture}}}
  \quad\text{,}\quad
 \vcenter{\hbox{%
  \begin{tikzpicture}[x=1.5pt, y=1.5pt]

 \tikzset{vertex/.style = {shape=circle,draw, inner sep=0pt, minimum size=3pt}}
 \tikzset{edge/.style = {->,> = latex'}}
 \node[vertex] (a) at  (-10,10) {};
 \node[vertex] (b) at  (0,10) {};
 \node[vertex] (c) at  (10,10) {};
 \node[vertex] (d) at  (0,0) {};
 \node[vertex] (e) at  (0,-10) {};
 \draw[edge] (a) to (d);
 \draw[edge] (b) to (d);
 \draw[edge] (c) to (d);
 \draw[edge] (d) to (e);
  \draw[edge] (c) to (e);
   \draw[edge] (a) to node [left=-2pt]{\tiny $\alpha$}(e);
 \end{tikzpicture}}}
 \ \text{$\alpha\ne 0,1$.}
 \]
Algebras with different graphs are not isomorphic, and two algebras with the third graph and parameters $\alpha, \alpha'$ are isomorphic if and only if
 $\alpha' \in \{\alpha, \alpha^{-1}, 1- \alpha, 1- \alpha^{-1}, (1- \alpha)^{-1},  (1- \alpha^{-1})^{-1}\}$.

    \item If the type is $[1,1,1,2]$, then $\cE$  is isomorphic to an algebra with one of
    the following  graphs
\[
 \vcenter{\hbox{%
 \begin{tikzpicture}[x=1.5pt, y=1.5pt]
 \tikzset{vertex/.style = {shape=circle,draw, inner sep=0pt, minimum size=3pt}}
 \tikzset{edge/.style = {->,> = latex'}}
 \node[vertex] (a) at  (-10,10) {};
 \node[vertex] (b) at  (10,10) {};
 \node[vertex] (c) at  (0,0) {};
 \node[vertex] (d) at  (0,-10) {};
  \node[vertex] (e) at  (0,-20) {};
 \draw[edge] (a) to (c);
 \draw[edge] (b) to (c);
 \draw[edge] (c) to (d);
  \draw[edge] (d) to (e);
 \end{tikzpicture}}}
  \quad\text{,}\quad
 \vcenter{\hbox{%
  \begin{tikzpicture}[x=1.5pt, y=1.5pt]
 \tikzset{vertex/.style = {shape=circle,draw, inner sep=0pt, minimum size=3pt}}
 \tikzset{edge/.style = {->,> = latex'}}
 \node[vertex] (a) at  (-10,10) {};
 \node[vertex] (b) at  (10,10) {};
 \node[vertex] (c) at  (0,0) {};
 \node[vertex] (d) at  (0,-10) {};
  \node[vertex] (e) at  (0,-20) {};
 \draw[edge] (a) to (c);
 \draw[edge] (b) to (c);
 \draw[edge] (c) to (d);
  \draw[edge] (d) to (e);
      \draw[edge] (a) to (e);
 \end{tikzpicture}}}
  \quad\text{,}\quad
 \vcenter{\hbox{%
  \begin{tikzpicture}[x=1.5pt, y=1.5pt]
 \tikzset{vertex/.style = {shape=circle,draw, inner sep=0pt, minimum size=3pt}}
 \tikzset{edge/.style = {->,> = latex'}}
 \node[vertex] (a) at  (-10,10) {};
 \node[vertex] (b) at  (10,10) {};
 \node[vertex] (c) at  (0,0) {};
 \node[vertex] (d) at  (0,-10) {};
  \node[vertex] (e) at  (0,-20) {};
 \draw[edge] (a) to (c);
 \draw[edge] (b) to (c);
 \draw[edge] (c) to (d);
  \draw[edge] (d) to (e);
   \draw[edge] (a) to (d);
    \draw[edge,dashed] (a) to[bend right=40] node [left=-2pt]{\tiny $\gamma$}(e);
 \end{tikzpicture}}}
  \quad\text{,}\quad
 \vcenter{\hbox{%
  \begin{tikzpicture}[x=1.5pt, y=1.5pt]
 \tikzset{vertex/.style = {shape=circle,draw, inner sep=0pt, minimum size=3pt}}
 \tikzset{edge/.style = {->,> = latex'}}
 \node[vertex] (a) at  (-10,10) {};
 \node[vertex] (b) at  (10,10) {};
 \node[vertex] (c) at  (0,0) {};
 \node[vertex] (d) at  (0,-10) {};
  \node[vertex] (e) at  (0,-20) {};
 \draw[edge] (a) to (c);
 \draw[edge] (b) to (c);
 \draw[edge] (c) to (d);
  \draw[edge] (d) to (e);
   \draw[edge] (a) to (d);
    \draw[edge,dashed] (a) to[bend right=40] node [left =-2pt]{\tiny $\gamma$}(e);
     \draw[edge] (b) to node [right =-2pt]{\tiny $\beta$}(d);
 \end{tikzpicture}}}
 \quad\text{.}
  \]
Algebras with different graphs are not isomorphic. 
   Algebras with the third graph and different values of the parameter  are not isomorphic.
   Finally, algebras with the fourth graph and parameters $(\beta,\gamma)$ and $(\beta',\gamma')$ are isomorphic if 
   and only if $(\beta',\gamma')=(\beta,\gamma)$ or $(\beta',\gamma')=(\beta^{-1},-\beta^{-3}\gamma)$.
\end{romanenumerate}
\end{theorem}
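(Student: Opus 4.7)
The plan is to handle each of the four types by invoking the appropriate structural theorem from Section \ref{se: somefamilies-nilpotent_evol} and analyzing the resulting orbit space. For type $[1,4]$, Theorem \ref{teo:threetypes}(i), combined with the fact that over an algebraically closed field all nondegenerate symmetric bilinear forms of a given dimension are similar, yields a unique isomorphism class whose graph is the one shown. For type $[1,3,1]$, this is exactly Corollary \ref{cor:type[1,n1]} with $n=3$: the two similarity orbits of nonzero vectors in $(\FF^3,b_3)$ (nonisotropic and isotropic) produce the two stated graphs.

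For type $[1,1,3]$ I would apply Theorem \ref{teo:threetypes}(ii). Diagonalizing $g$ in an orthonormal eigenbasis, the algebra is determined by the unordered triple of eigenvalues $(\mu_1,\mu_2,\mu_3)$ of $g$ modulo the affine action $\lambda\mapsto a\lambda+b$ (with $a\ne 0$, $b\in\FF$) together with the symmetric group $S_3$ permuting indices. The three cases ``all equal'', ``two equal, one different'', and ``all distinct'' normalize respectively to $(0,0,0)$, $(0,0,1)$, and $(0,1,\alpha)$, producing the three graphs. For the third graph, different values of $\alpha$ give isomorphic algebras exactly when they are related by the induced $S_3$-action; computing the six images of the quantity $\alpha=(\mu_3-\mu_1)/(\mu_2-\mu_1)$ under permutations of $(\mu_1,\mu_2,\mu_3)$ yields the set $\{\alpha,\alpha^{-1},1-\alpha,(1-\alpha)^{-1},1-\alpha^{-1},(1-\alpha^{-1})^{-1}\}$.

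For type $[1,1,1,2]$ I would apply Theorem \ref{teo:threetypes}(iii). Since $f,g$ are commuting, symmetric, and diagonalizable, they admit a common orthogonal eigenbasis; write the eigenvalue pairs as $(\phi_1,\phi_2)$ and $(\gamma_1,\gamma_2)$. The equivalence becomes $\phi_i\mapsto\mu\phi_i$, $\gamma_i\mapsto\mu^3\gamma_i+\nu$ with $\mu\ne 0$, $\nu\in\FF$, plus the swap $(\phi_1,\phi_2,\gamma_1,\gamma_2)\leftrightarrow(\phi_2,\phi_1,\gamma_2,\gamma_1)$. The number of nonzero $\phi_i$'s is invariant, so I split into four cases: both zero with $\gamma_1=\gamma_2$ yields the first graph; both zero with $\gamma_1\ne\gamma_2$ normalizes to $(0,0,1,0)$, giving the second; exactly one nonzero normalizes, after swapping and scaling, to $(1,0,\gamma,0)$ with $\gamma=(\gamma_1-\gamma_2)/\phi_1^3$ a complete invariant (the stabilizer of this form being trivial), giving the third; both nonzero normalizes to $(1,\beta,\gamma,0)$ with $\beta\ne 0$, giving the fourth, whose only residual symmetry is the index swap followed by rescaling and translation, sending $(\beta,\gamma)$ to $(\beta^{-1},-\beta^{-3}\gamma)$.

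The main obstacle is the stabilizer analysis in the third and fourth cases of type $[1,1,1,2]$, together with verifying that the $S_3$-orbit for type $[1,1,3]$ consists of exactly the six stated values with no further identifications. Non-isomorphism between graph shapes within the same type follows from obvious invariants such as the multiplicities of the eigenvalues of the relevant endomorphisms, which translate directly into the edge patterns of the graphs.
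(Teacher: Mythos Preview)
Your proposal is correct and follows essentially the same route as the paper: invoke Theorem \ref{teo:threetypes}(i) for type $[1,4]$, Corollary \ref{cor:type[1,n1]} for type $[1,3,1]$, and for types $[1,1,3]$ and $[1,1,1,2]$ reduce via Theorem \ref{teo:threetypes}(ii),(iii) to the action $(g\mapsto \mu g+\nu\id)$, respectively $((f,g)\mapsto(\mu f,\mu^3 g+\nu\id))$, on eigenvalue tuples together with the permutation coming from the orthogonal similarity. Your cross-ratio computation for the $S_3$-orbit in type $[1,1,3]$ and your case split on the number of nonzero eigenvalues of $f$ in type $[1,1,1,2]$ are exactly what the paper does, only spelled out in slightly more detail.
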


 \begin{proof}
 Type $[1,4] $ is covered by Theorem \ref{teo:threetypes} (i) and type $[1,3,1]$ by Corollary \ref{cor:type[1,n1]}.
 
 Any algebra of type $[1,1,3]$ is isomorphic to an algebra $\cE(\cU, b,g)$, with $\dim(\cU) = 3$, as in Theorem \ref{teo:threetypes}(ii), and we may change the symmetric endomorphism  $g$ by $\mu g + \nu \id$ for $\mu, \nu \in \FF, \mu \neq 0$.

There are then three possibilities which give nonisomorphic algebras:

 \begin{itemize}

  \item  $g$ has a unique eigenvalue $\nu$ with multiplicity $3$. In this case, replacing $g$ by $g-\nu\id$, we may assume $g=0$ and get the first graph in item (iii).

  \item $g$ has two eigenvalues, one   with multiplicity $2$. Then we may assume that the eingenvalues are $0$, with multiplicity $2$, and $1$, and we get the second graph in item (iii).

  \item $g$ has three different eigenvalues, which we may assume to be $0,1,\alpha$, 
  $\alpha \in \FF \setminus\{0,1\}$.  The algebras with parameters $\alpha,\alpha' \in \FF \setminus\{0,1\}$ 
  are isomorphic if and only if there are scalars $\mu, \nu \in \FF, \mu \neq 0$, such that 
  $\{\nu, \mu + \nu, \mu \alpha  + \nu  \} = \{0,1,\alpha' \}$
  and the result in item (iii) follows. 
\end{itemize}

 Finally, any algebra of type $[1,1,1,2]$ is  isomorphic to an algebra $\cE(\cU, b,f,g)$  as in Theorem \ref{teo:threetypes}(iii). We may change $(f,g)$ by $(\mu f, \mu^3 g
 + \nu \id)$, with $\mu, \nu \in \FF, \mu \neq 0$.
 
 If $f = 0$ we get the first two possibilities, according to $g$ having only one eigenvalue (with multiplicity $2$), or two different eigenvalues.
 
 If $f \neq 0$ but one of its eigenvalues is $0$, then we get the third possibility, as we can assume that 
 the nonzero eigenvalue is $1$. Finally, if $0$ is not an 
eigenvalue of $f$, we may assume that the eigenvalues are $1$, $\beta $, with $0 \neq \beta \in \FF$. Then we may 
take $g$ with corresponding eigenvalues $\gamma$ and $0$. 
Two such algebras with parameters $(\beta, \gamma) $ and $(\beta', \gamma') $ are isomorphic if and only if there 
are scalars $\mu, \nu \in \FF, \mu \neq 0$, such that 
either $(1,\beta') = \mu (1,\beta) $ and $(\gamma',0) =\mu^3(\gamma,0) + \nu (1,1)$ or $(1,\beta') = \mu (\beta,1) $ and $(\gamma',0) =\mu^3(0,\gamma) + \nu (1,1)$, 
whence the result.
 \end{proof}
 
 \medskip
 
 The remaining types: $[1,2,2], [1,2,1,1], [1,1,2,1]$ and $[1,1,1,1,1]$, will be treated separately.
 
 Without further mention, the following fact will be used. Given a nilpotent evolution algebra of type 
 $[n_1, \ldots, n_r]$ and a natural basis $B =\{x_1, \ldots, x_n\}$ of $\cE$ with 
 $x_1, \ldots, x_{n_r} \in \ann^r(\cE) \setminus \ann^{r-1}(\cE)$, and 
 $x_{n_{r} +1}, \ldots, x_{n} \in \ann^{r-1}(\cE)$, then if we pick any other 
 natural basis of $\ann^{r-1}(\cE)$: $\{ y_{n_{r} +1}, \ldots, y_{n} \}$, the new basis 
 $\{ x_1, \ldots, x_{n_r}, y_{n_{r} +1}, \ldots, y_{n} \}$ is again a natural basis of 
 $\cE$.
 
 \begin{theorem}
   Let $\cE$ be a  nilpotent evolution algebra of type $[1,2,2]$, over an algebraically closed field of 
   characteristic not two. Then $\cE$ is isomorphic to an algebra with one of
   the following graphs:
\[
 \vcenter{\hbox{%
  \begin{tikzpicture}[x=1.5pt, y=1.5pt]
 \tikzset{vertex/.style = {shape=circle,draw, inner sep=0pt, minimum size=3pt}}
 \tikzset{edge/.style = {->,> = latex'}}
 \node[vertex] (a) at  (-10,30) {};
 \node[vertex] (b) at  (10,30) {};
  \node[vertex] (c) at  (-10,10) {};
 \node[vertex] (d) at  (10,10) {};
 \node[vertex] (e) at  (0,0) {};
 
 \draw[edge] (a) to (c);
 \draw[edge] (b) to (d);
 \draw[edge,dashed] (b) to node [near start,left=-1pt]{\tiny $\alpha$} (c);
 \draw[edge] (d) to (e);
 \draw[edge] (c) to (e);
 
 \end{tikzpicture}}}
  \ \text{,}\quad
 \vcenter{\hbox{%
  \begin{tikzpicture}[x=1.5pt, y=1.5pt]
 \tikzset{vertex/.style = {shape=circle,draw, inner sep=0pt, minimum size=3pt}}
 \tikzset{edge/.style = {->,> = latex'}}
 \node[vertex] (a) at  (-10,30) {};
 \node[vertex] (b) at  (10,30) {};
  \node[vertex] (c) at  (-10,10) {};
 \node[vertex] (d) at  (10,10) {};
 \node[vertex] (e) at  (0,0) {};
 
 \draw[edge] (a) to (c);
 \draw[edge] (b) to (c);
 \draw[edge] (d) to (e);
 \draw[edge] (c) to (e);
 \end{tikzpicture}}}
  \ \text{,}\quad
 \vcenter{\hbox{%
  \begin{tikzpicture}[x=1.5pt, y=1.5pt]
 \tikzset{vertex/.style = {shape=circle,draw, inner sep=0pt, minimum size=3pt}}
 \tikzset{edge/.style = {->,> = latex'}}
 \node[vertex] (a) at  (-10,30) {};
 \node[vertex] (b) at  (10,30) {};
  \node[vertex] (c) at  (-10,10) {};
 \node[vertex] (d) at  (10,10) {};
 \node[vertex] (e) at  (0,0) {};
 
 \draw[edge] (a) to (c);
 \draw[edge] (b) to (c);
  \draw[edge] (b) to (e);
 \draw[edge] (d) to (e);
 \draw[edge] (c) to (e);

 \end{tikzpicture}}}
  \ \text{,}\quad
 \vcenter{\hbox{%
  \begin{tikzpicture}[x=1.5pt, y=1.5pt]
 \tikzset{vertex/.style = {shape=circle,draw, inner sep=0pt, minimum size=3pt}}
 \tikzset{edge/.style = {->,> = latex'}}
 \node[vertex] (a) at  (-10,30) {};
 \node[vertex] (b) at  (10,30) {};
  \node[vertex] (c) at  (-10,10) {};
 \node[vertex] (d) at  (10,10) {};
 \node[vertex] (e) at  (0,0) {};
 
 \draw[edge] (a) to (c);
 \draw[edge] (a) to node[very near start,right]{\tiny $i$} (d);
 \draw[edge] (b) to (c);
 \draw[edge] (b) to node[near start,right=-1pt]{\tiny $i$} (d);
 \draw[edge] (d) to (e);
 \draw[edge] (c) to (e);

 \end{tikzpicture}}}
  \ \text{,}\quad
 \vcenter{\hbox{%
  \begin{tikzpicture}[x=1.5pt, y=1.5pt]
 \tikzset{vertex/.style = {shape=circle,draw, inner sep=0pt, minimum size=3pt}}
 \tikzset{edge/.style = {->,> = latex'}}
 \node[vertex] (a) at  (-10,30) {};
 \node[vertex] (b) at  (10,30) {};
  \node[vertex] (c) at  (-10,10) {};
 \node[vertex] (d) at  (10,10) {};
 \node[vertex] (e) at  (0,0) {};
 
 \draw[edge] (a) to (c);
  \draw[edge] (a) to node[very near start,right]{\tiny $i$} (d);
 \draw[edge] (b) to (c);
  \draw[edge] (b) to node[near start,right=-1]{\tiny $i$} (d);
  \draw[edge] (b) to (e);
 \draw[edge] (d) to (e);
 \draw[edge] (c) to (e);

 \end{tikzpicture}}}
  \ \text{,}\quad
 \vcenter{\hbox{%
  \begin{tikzpicture}[x=1.5pt, y=1.5pt]
 \tikzset{vertex/.style = {shape=circle,draw, inner sep=0pt, minimum size=3pt}}
 \tikzset{edge/.style = {->,> = latex'}}
 \node[vertex] (a) at  (-10,30) {};
 \node[vertex] (b) at  (10,30) {};
  \node[vertex] (c) at  (-10,10) {};
 \node[vertex] (d) at  (10,10) {};
 \node[vertex] (e) at  (0,0) {};
 
  \draw[edge] (a) to (c);
 \draw[edge] (a) to node[very near start,right]{\tiny $i$} (d);
 \draw[edge] (b) to (c);
 \draw[edge] (b) to node[near start,right=-3pt]{\tiny $-i$} (d);
 \draw[edge] (d) to (e);
 \draw[edge] (c) to (e);

 \end{tikzpicture}}} 
 \text{.}
 \]
Algebras with different graphs are not isomorphic, and two algebras with the first graph and parameters 
$\alpha, \alpha'$ are isomorphic if and only if 
$\alpha' = \alpha $ or $\alpha' = -\alpha$.

 \end{theorem}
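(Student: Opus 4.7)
The plan is to choose a natural basis $\{x,y,a,b,s\}$ adapted to the upper annihilating series, with $x,y\in\cE\setminus\ann^2(\cE)$, $a,b\in\ann^2(\cE)\setminus\ann(\cE)$ and $s$ spanning $\ann(\cE)$, and then normalize it step by step. Since $\ann^2(\cE)$ is a three-dimensional nilpotent evolution algebra of type $[1,2]$, Theorem \ref{teo:threetypes}(i) over an algebraically closed field lets me normalize its part of the basis so that $a^2=b^2=s$; equivalently, the canonical quadratic form $q$ on $U_2=\ann^2(\cE)/\ann(\cE)$ defined by $v^2=q(\bar v)s$ becomes the standard sum of squares on $\espan{\bar a,\bar b}\cong\FF^2$. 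Writing $x^2=\alpha_1 a+\alpha_2 b+\gamma_1 s$ and $y^2=\beta_1 a+\beta_2 b+\gamma_2 s$ with neither $(\alpha_1,\alpha_2)$ nor $(\beta_1,\beta_2)$ vanishing, the allowed natural-basis changes (by Corollary \ref{cor:basechange}) amount to: rescaling $s$ with compensating rescalings of $a,b$; applying an orthogonal similarity of $(U_2,q)$; adding $s$-multiples to $a,b$; and replacing $(x,y)$ by $(px+qy,rx+ty)$ plus $s$-additions, with nonzero determinant, subject to the constraint $pr\,x^2+qt\,y^2=0$.

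I would then split on the invariant $\dim\cE^2=2+\mathrm{rank}(\bar{x^2},\bar{y^2})$. In Case A ($\mathrm{rank}=2$) the coefficient matrix $\bigl(\begin{smallmatrix}\alpha_1&\alpha_2\\\beta_1&\beta_2\end{smallmatrix}\bigr)$ is invertible, so I can solve for $\lambda_a,\lambda_b$ so that adding $\lambda_a s$ to $a$ and $\lambda_b s$ to $b$ kills both $\gamma_i$. A further case distinction on isotropy under $q$ yields the first graph (if at least one of $\bar{x^2},\bar{y^2}$ is non-isotropic, I swap if necessary to make $\bar{x^2}$ so, rotate to $\bar{x^2}=\bar a$, and rescale $y$ to normalize the $\bar b$-coefficient of $\bar{y^2}$ to $1$, producing the parameter $\alpha$; the residual reflection $\bar b\mapsto-\bar b$ and the swap $x\leftrightarrow y$ both implement $\alpha\sim-\alpha$ after re-normalization) or the last graph (both isotropic, necessarily on the two distinct isotropic lines, normalized uniquely to $\bar a\pm i\bar b$). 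In Case B ($\mathrm{rank}=1$), write $\bar{y^2}=\rho\bar{x^2}$; only $\gamma_1$ can be killed, leaving $y^2=\rho x^2+\eta s$. After rescaling $y$ to put $\rho=1$, I split on whether $\bar{x^2}$ is $q$-isotropic: non-isotropic normalizes to $x^2=a$, $y^2=a+\eta s$; isotropic normalizes to $x^2=a+ib$, $y^2=a+ib+\eta s$. In each subcase $\eta=0$ gives the corresponding middle graph, while $\eta\ne 0$ reduces to $\eta=1$ by rescaling $s$ with the compensating rescalings of $a,b,x,y$.

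For the non-isomorphism assertions, Case A versus Case B is distinguished by $\dim\cE^2\in\{3,2\}$, and within Case B by $(\cE^2)^2=0$ (graphs $4,5$) versus $\ne 0$ (graphs $2,3$). The main obstacle is distinguishing the remaining pairs that agree on these invariants: graph $2$ from graph $3$, graph $4$ from graph $5$, and graph $1$ from graph $6$. I plan to use the number of irreducible components of the variety $\{v\in\cE:v^2=0\}$ for the first two pairs (direct calculation shows $4$ components in graphs $2$ and $4$ versus $2$ in graphs $3$ and $5$, the difference arising because in the latter two the extra $s$-term couples the variables through an irreducible quadric $\lambda^2+\mu^2=p^2$ rather than four separated linear conditions), and an analogous count of components of $\{v:(v^2)^2=0\}$ for graph $1$ versus graph $6$. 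Finally, distinct values of $\alpha^2$ in the first graph yield non-isomorphic algebras because $\alpha^2$ itself is an intrinsic invariant, recoverable as the ratio $b(\bar{x^2},\bar{y^2})^2/\bigl(b(\bar{x^2},\bar{x^2})\,b(\bar{y^2},\bar{y^2})-b(\bar{x^2},\bar{y^2})^2\bigr)$ where $b$ is the polarization of $q$, which a direct check shows to be preserved by every allowed change of natural basis.
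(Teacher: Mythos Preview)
Your normalization argument is correct and is essentially the paper's argument, reorganized: you split first on $\operatorname{rank}(\bar{x^2},\bar{y^2})$ and then on isotropy, whereas the paper splits first on whether both $\bar{x^2},\bar{y^2}$ are isotropic and then on $\dim\cE^2$; the resulting case tree and the basis adjustments (absorbing $s$-terms, rescaling, rotating $(a,b)$) coincide.

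Where you diverge is in the invariants used to separate the graphs. The paper never computes irreducible components of $\{v:v^2=0\}$ or $\{v:(v^2)^2=0\}$. Instead it uses the subspace $\cU_3\oplus\cU_1$ from Proposition~\ref{pr:UiU1}: the single number $\dim(\cU_3\oplus\cU_1)^2=\dim\espan{x^2,y^2}$ already separates graph~2 from graph~3 and graph~4 from graph~5 (it equals $1$ versus $2$). For the $\alpha$-parameter in the first family the paper does a direct change-of-basis computation via Corollary~\ref{cor:basechange}: from $x'y'=0$ one gets $\mu_{11}\mu_{21}=\mu_{12}\mu_{22}=0$, hence the transition matrix on $\cU_3$ is monomial, and then $u'v'=0$ forces $(\alpha')^2=\alpha^2$. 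The separation of graph~1 from graph~6 is left implicit in the paper's case dichotomy (both $\bar{x^2},\bar{y^2}$ isotropic versus not), which is an invariant precisely because, when $\dim\cE^2=3$, the pair $\{\bar{x^2},\bar{y^2}\}\subset U_2$ is determined up to scalars and swap.

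Your variety-component counts and your ratio $b(\bar{x^2},\bar{y^2})^2/\bigl(b(\bar{x^2},\bar{x^2})\,b(\bar{y^2},\bar{y^2})-b(\bar{x^2},\bar{y^2})^2\bigr)$ are genuine invariants and do the job (I checked: graph~1 gives $3$ or $4$ components of $\{(v^2)^2=0\}$ depending on $\alpha=\pm i$ or not, graph~6 gives $2$; and the ratio equals $\alpha^2$), but they are heavier machinery than needed. The paper's invariants are tailored to the structures already set up in Section~\ref{se: upperseries}, so if you want to align with the paper's toolkit, replace your component counts by $\dim(\cU_3\oplus\cU_1)^2$ and your ratio argument by the monomial-matrix computation.
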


 \begin{proof}
 Let $\{x,y,u,v,s\}$ be  a natural basis with $\ann(\cE) = \FF s$ and $\ann^2(\cE)= \espan{u,v,s}$. 
 Since $\ann^2(\cE) $ is of type $[1,2]$ we may assume that $ u^2 = v^2 = s$. Let us consider, as in 
 Section \ref{se: upperseries}, the  subspaces $\cU_1 = \FF s$, $\cU_2 = \FF u \oplus  \FF v$, and
 $\cU_3 = \FF x \oplus  \FF y$. Now,
\begin{equation}\label{eq:x2y2}
x^2 = a +\mu s, \; \; y^2 = b +\nu  s,
\end{equation}
with $0\neq a, b \in \cU_2$ and $\mu, \nu \in \FF$. Then, either $ a^2 = b^2 = 0$ ( that is, $(x^2)^2 = (y^2)^2 = 0$), or we may assume that $a^2 \neq 0$.
 
 In the latter case: $a^2 \neq 0$, we may change $u $ by $a + \mu s $ and $v$ by an element in $\cU_2$ orthogonal to $a$.  Thus, we may assume that $x^2 = u$ and $y^2 =
 \alpha u + \beta v +\gamma s$ with $(\alpha,\beta) \neq (0,0)$.
 
 If $\beta \neq 0$, the dimension of $\cE^2$ is $3$ and changing $v$ by $v + \beta^{-1} \gamma  s$, and scaling $y$, we may assume $y^2 = \alpha u + v$, thus getting the graph 
\[ 
   \begin{tikzpicture}[x=1.5pt, y=1.5pt]
 \tikzset{vertex/.style = {shape=circle,draw, inner sep=0pt, minimum size=3pt}}
 \tikzset{edge/.style = {->,> = latex'}}
 \node[vertex] (a) at  (-10,30) {};
 \node[vertex] (b) at  (10,30) {};
  \node[vertex] (c) at  (-10,10) {};
 \node[vertex] (d) at  (10,10) {};
 \node[vertex] (e) at  (0,0) {};
 
 \draw[edge] (a) to (c);
 \draw[edge] (b) to (d);
 \draw[edge,dashed] (b) to node [near start,left=-1pt]{\tiny $\alpha$} (c);
 \draw[edge] (d) to (e);
 \draw[edge] (c) to (e);
 
 \end{tikzpicture}\qquad\raisebox{20pt}{\text{.}}
  \]
 
 If this algebra were isomorphic to an algebra with the same graph but with parameter $\alpha'$, there woud be another natural basis 
 $\{x',y',u',v',s'\}$ with (see Corollary \ref{cor:basechange}): 
  \[
 x' =\mu_{11} x + \mu_{12} y + \nu_1 s,   \   y' =\mu_{21} x + \mu_{22} y + \nu_2 s 
  \]
    \[
  u' = (x')^2, \ v' = (y')^2 -\alpha' u',  \  s' = (u')^2 = (v')^2
 \]
 From $x'y' = 0$ we get $\mu_{11} \mu_{21} + \mu_{12} \mu_{22} = 0 = \mu_{12} \mu_{21}, $ so either $\mu_{12} = 0$ or $\mu_{21} = 0$.
 
 If $\mu_{12} = 0$ we get $\mu_{11}\neq 0 \neq \mu_{22} $ and $\mu_{21} = 0$.  From $u'v' = 0$ we quickly chek that $(\alpha')^2 = \alpha^2$,   and the same happens if $\mu_{22} = 0$.
 
 If $\beta = 0$ then $\dim(\cE^2) = 2$ and, after scaling $y$, we get the graph
 \[
    \begin{tikzpicture}[x=1.5pt, y=1.5pt]
 \tikzset{vertex/.style = {shape=circle,draw, inner sep=0pt, minimum size=3pt}}
 \tikzset{edge/.style = {->,> = latex'}}
 \node[vertex] (a) at  (-10,30) {};
 \node[vertex] (b) at  (10,30) {};
  \node[vertex] (c) at  (-10,10) {};
 \node[vertex] (d) at  (10,10) {};
 \node[vertex] (e) at  (0,0) {};
 
 \draw[edge] (a) to (c);
  \draw[edge] (b) to (c);
 \draw[edge,dashed] (b) to node [left=-2pt]{\tiny $\gamma$} (e);
 \draw[edge] (d) to (e);
 \draw[edge] (c) to (e);
 
 \end{tikzpicture}\quad\raisebox{20pt}{\text{.}}
  \]
  If $\gamma \neq 0$, in the new natural basis $\{\sqrt{\gamma^{-1}} x, \sqrt{\gamma^{-1}}y, \gamma^{-1} u, \gamma^{-1} v, \gamma^{-2} s\}$ we get the graph above with $\gamma = 1$. 
  Note that for $\gamma = 0$, $\dim(\cU_3 \oplus \cU_1)^2 = 1$, and for $\gamma = 1$, 
  $\dim(\cU_3 \oplus \cU_1)^2 = 2$, so we obtain nonisomorphic algebras.
  
  Finally, if $ a^2 = b^2 = 0$ in \eqref{eq:x2y2}, then we may assume $x^2 = u +iv$, with $i^2 = -1$, and $y^2 = \epsilon u +\delta v + \gamma s$, with $\epsilon, \delta \neq 0$, $\epsilon^2 +\delta^2 = 0$. Scaling $ y$ we may assume $\epsilon = 1$ and $\delta = \pm i$.
  
  If $\delta = i$, $\dim(\cU_3 \oplus \cU_1)^2 = 1$ for $\gamma = 0$ and we obtain the fourth graph, and $\dim(\cU_3 \oplus \cU_1)^2 = 2$ if $\gamma \neq 0$. In the latest case it is easy to get a new natural basis with $\gamma = 1$, and we obtain the fifth graph. Here $\dim\cE^2=2$ holds.
  
    If $\delta = -i$, changing $ u$ by $u+ \frac{1}{2} \gamma s$ and $ v$ by $v + \frac{i}{2} \gamma s$ we may assume $\gamma = 0$, thus obtaining the last graph. In this case $\dim\cE^2=3$.
  \end{proof}
  
   \begin{theorem}
   Let $\cE$ be a  nilpotent evolution algebra of type $[1,2,1,1]$, over an algebraically closed field of characteristic not two. Then $\cE$ is isomorphic to an algebra with one of
   the following graphs:
\[
 \vcenter{\hbox{%
  \begin{tikzpicture}[x=1.5pt, y=1.5pt]
 \tikzset{vertex/.style = {shape=circle,draw, inner sep=0pt, minimum size=3pt}}
 \tikzset{edge/.style = {->,> = latex'}}
  \node[vertex] (a) at  (-10,30) {};
 \node[vertex] (b) at  (-10,20) {};
  \node[vertex] (c) at  (-10,10) {};
 \node[vertex] (d) at  (10,10) {};
 \node[vertex] (e) at  (0,0) {};

 \draw[edge] (a) to (b);
 \draw[edge] (b) to (c);
 \draw[edge] (c) to (e);
 \draw[edge] (d) to (e);
 \end{tikzpicture}}}
  \quad\text{,}\quad
%
%
%
 %
 \vcenter{\hbox{%
 \begin{tikzpicture}[x=1.5pt, y=1.5pt]
 \tikzset{vertex/.style = {shape=circle,draw, inner sep=0pt, minimum size=3pt}}
 \tikzset{edge/.style = {->,> = latex'}}
  \node[vertex] (a) at  (-10,30) {};
 \node[vertex] (b) at  (-10,20) {};
  \node[vertex] (c) at  (-10,10) {};
 \node[vertex] (d) at  (10,10) {};
 \node[vertex] (e) at  (0,0) {};

 \draw[edge] (a) to (b);
  \draw[edge] (a) to (d);
 \draw[edge] (b) to (c);
 \draw[edge] (c) to (e);
 \draw[edge] (d) to (e);
 \end{tikzpicture}}}
  \quad\text{,}\quad
 \vcenter{\hbox{%
 \begin{tikzpicture}[x=1.5pt, y=1.5pt]
 \tikzset{vertex/.style = {shape=circle,draw, inner sep=0pt, minimum size=3pt}}
 \tikzset{edge/.style = {->,> = latex'}}
  \node[vertex] (a) at  (-10,30) {};
 \node[vertex] (b) at  (-10,20) {};
  \node[vertex] (c) at  (-10,10) {};
 \node[vertex] (d) at  (10,10) {};
 \node[vertex] (e) at  (0,0) {};

 \draw[edge] (a) to (b);
 \draw[edge,dashed] (a) to node[right=-2pt]{\tiny $\beta$} (d);
 \draw[edge] (a) to[bend right=40] (c);
 \draw[edge] (b) to (c);
 \draw[edge] (c) to (e);
 \draw[edge] (d) to (e);
 \end{tikzpicture}}}
  \quad\text{,}\quad
  \]
 \[
 \vcenter{\hbox{%
   \begin{tikzpicture}[x=1.5pt, y=1.5pt]
\tikzset{vertex/.style = {shape=circle,draw, inner sep=0pt, minimum size=3pt}}
\tikzset{edge/.style = {->,> = latex'}}
\node[vertex] (a) at  (0,40) {};
\node[vertex] (b) at  (0,20) {};
\node[vertex] (c) at  (-10,10) {};
\node[vertex] (d) at  (10,10) {};
\node[vertex] (e) at  (0,0) {};

\draw[edge] (a) to (b);
\draw[edge] (b) to node[near start,right]{\tiny $i$} (d);
\draw[edge] (b) to (c);
\draw[edge] (c) to (e);
\draw[edge] (d) to (e);
\end{tikzpicture}}}
  \quad\text{,}\quad
 \vcenter{\hbox{%
  \begin{tikzpicture}[x=1.5pt, y=1.5pt]
\tikzset{vertex/.style = {shape=circle,draw, inner sep=0pt, minimum size=3pt}}
\tikzset{edge/.style = {->,> = latex'}}
\node[vertex] (a) at  (0,40) {};
\node[vertex] (b) at  (0,20) {};
\node[vertex] (c) at  (-10,10) {};
\node[vertex] (d) at  (10,10) {};
\node[vertex] (e) at  (0,0) {};

\draw[edge] (a) to (b);
\draw[edge] (a) to (c);
\draw[edge] (b) to node[near start,right]{\tiny $i$} (d);
\draw[edge] (b) to (c);
\draw[edge] (c) to (e);
\draw[edge] (d) to (e);
\end{tikzpicture}}}
  \quad\text{,}\quad
 \vcenter{\hbox{%
  \begin{tikzpicture}[x=1.5pt, y=1.5pt]
\tikzset{vertex/.style = {shape=circle,draw, inner sep=0pt, minimum size=3pt}}
\tikzset{edge/.style = {->,> = latex'}}
\node[vertex] (a) at  (0,40) {};
\node[vertex] (b) at  (0,20) {};
\node[vertex] (c) at  (-10,10) {};
\node[vertex] (d) at  (10,10) {};
\node[vertex] (e) at  (0,0) {};

\draw[edge] (a) to (b);
\draw[edge] (a) to (c);
\draw[edge] (a) to node[near start,right=-2pt]{\tiny $i$} (d);
\draw[edge] (b) to node[very near start,right=-2pt]{\tiny $i$} (d);
\draw[edge] (b) to (c);
\draw[edge] (c) to (e);
\draw[edge] (d) to (e);
\end{tikzpicture}}}
  \quad\text{,}\quad
 \vcenter{\hbox{%
  \begin{tikzpicture}[x=1.5pt, y=1.5pt]
\tikzset{vertex/.style = {shape=circle,draw, inner sep=0pt, minimum size=3pt}}
\tikzset{edge/.style = {->,> = latex'}}
\node[vertex] (a) at  (0,40) {};
\node[vertex] (b) at  (0,20) {};
\node[vertex] (c) at  (-10,10) {};
\node[vertex] (d) at  (10,10) {};
\node[vertex] (e) at  (0,0) {};

\draw[edge] (a) to (b);
\draw[edge] (a) to (c);
\draw[edge] (a) to node[near start,right=-3pt]{\tiny $-i$} (d);
\draw[edge] (b) to node[very near start,right=-2pt]{\tiny $i$} (d);
\draw[edge] (b) to (c);
\draw[edge] (c) to (e);
\draw[edge] (d) to (e);
\end{tikzpicture}}}
  \quad\text{.}\quad
 \]
Algebras with different graphs are not isomorphic, and two algebras with the third graph, and parameters 
$\beta, \beta' \in \FF$, are isomorphic if and only if either $\beta'=\beta$ or $\beta'=-\beta$.
 \end{theorem}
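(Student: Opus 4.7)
I would start from any natural basis $\{x, u, y, z, s\}$ ordered so that $s \in \ann(\cE)$, the pair $y, z$ lies in $\ann^2(\cE)\setminus\ann(\cE)$, $u \in \ann^3(\cE)\setminus\ann^2(\cE)$, and $x \in \cE\setminus\ann^3(\cE)$. Rescaling $y, z$ gives $y^2 = z^2 = s$, and since $\ann^3(\cE)$ is a nilpotent evolution algebra of type $[1,2,1]$, Corollary \ref{cor:type[1,n1]} applied to $\ann^3(\cE)$ shows that after a rotation in $\cU_2$ and a rescaling of $u$ one may assume either $u^2 = y$ (Case A, non-isotropic) or $u^2 = y + iz$ (Case B, isotropic). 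Writing $x^2 = \sigma u + \phi y + \psi z + \omega s$ with $\sigma \neq 0$, I would rescale $x$ so that $\sigma = 1$ and replace $u$ by $u + \omega s$ (which preserves $u^2$) to kill $\omega$, leaving $x^2 = u + \phi y + \psi z$ in each case.

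The heart of the argument is to classify $(\phi, \psi)$ under the remaining freedom of natural-basis change. By Corollary \ref{cor:basechange}, any change of natural basis has block form $s \mapsto \kappa s$, $u \mapsto \rho u + \zeta s$, $x \mapsto \mu x + \theta s$, plus a transformation of $(y, z)$ in $\cU_2 \oplus \FF s$; the constraints $y'^2 = z'^2 = s'$, $y' z' = 0$, and invariance of the normalized $u^2$ cut down the group explicitly in each case. In Case A the induced action on $(\phi, \psi)$ is $(\phi, \psi) \mapsto (\phi/\rho, \pm \psi/\rho)$ for $\rho \in \FF^{\star}$, yielding three orbits corresponding to graphs 1 ($\phi = \psi = 0$), 2 ($\phi = 0 \neq \psi$, normalized to $\psi = 1$), and 3 ($\phi \neq 0$, normalized to $\phi = 1$ with $\psi = \beta$ determined up to sign).

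In Case B, I would pass to the null coordinates $p = y + iz = u^2$ and $q = y - iz$, and set $A = (\phi - i \psi)/2$, $B = (\phi + i \psi)/2$, so that $x^2 = u + A p + B q$. Solving the constraints imposed by $u'^2 = y' + iz'$ together with the orthogonality conditions, I would obtain a two-parameter family of admissible transformations parametrized by $\rho, \kappa \in \FF^{\star}$, with induced action $(A, B) \mapsto (A/\rho, B \rho^3/\kappa)$. Since $A$ and $B$ can then be rescaled independently but vanishing coordinates remain zero, the orbits are indexed by the vanishing pattern of $(A, B)$, giving graph 4 (both zero), graph 6 ($A \neq 0 = B$), graph 7 ($A = 0 \neq B$), and graph 5 (both nonzero, with representative $x^2 = u + y$).

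To verify that algebras with distinct graphs are not isomorphic, I would exhibit basis-independent invariants. Corollary \ref{cor:basechange} forces $u$ and $x$ to be determined up to $u \mapsto \rho u + \zeta s$ and $x \mapsto \mu x + \theta s$, so the lines $\FF u^2$ and $\FF (x^2)^2$ in $\ann^2(\cE)$, and the subspaces $x^2 \cdot \ann^2(\cE)$ and $\FF(x^2 \cdot u^2)$, are all well-defined. Case A versus Case B is then separated by whether $(u^2)^2 \neq 0$. Within each case, the three conditions $x^2 \cdot \ann^2(\cE) = 0$, $(x^2)^2 \in \FF u^2$, and $x^2 \cdot u^2 = 0$ together distinguish the remaining graphs: for example graphs 1 and 4 are uniquely characterized in their respective cases by $x^2 \cdot \ann^2(\cE) = 0$, graph 5 is the unique Case B graph with $(x^2)^2 \notin \FF u^2$, and graphs 6 and 7 are separated by $x^2 \cdot u^2 = 0$ versus $\neq 0$. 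The main obstacle I anticipate is the bookkeeping in Case B, since preserving $u^2 = y + iz$ forces intricate mixing of $y, z, s$ in admissible changes of basis, and one must solve the resulting constraint system explicitly to confirm the two-parameter action on $(A, B)$.
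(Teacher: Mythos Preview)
Your plan is correct and follows the same overall architecture as the paper: split according to the isomorphism type of $\ann^3(\cE)$ (type $[1,2,1]$), normalize $x^2$, and classify the residual parameters under admissible base changes controlled by Corollary~\ref{cor:basechange}. Case~A is essentially identical to the paper's treatment.

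The genuine difference is in Case~B. The paper does not compute the group action; instead it normalizes $\alpha$ to $1$ whenever $(\alpha,\beta)\neq(0,0)$, and then separates $\beta=i$, $\beta=-i$, and $\beta\neq\pm i$ by means of a composite invariant
\[
\Phi:\cU_4\oplus\cU_1\xrightarrow{\varphi}\ann^3(\cE)/(\cU_3\oplus\cU_1)\xrightarrow{\phi^{-1}}\ann^2(\cE)/\ann(\cE)\xrightarrow{\psi}\ann(\cE),
\]
together with $\dim\bigl(\cE^2\cap(\cU_3\oplus\cU_1)\bigr)$, and then exhibits an explicit base change reducing $\beta\neq\pm i$ to $\beta=0$. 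Your route---pass to the null basis $p=y+iz$, $q=y-iz$, solve the constraints, and obtain the action $(A,B)\mapsto(A/\rho,\,B\rho^3/\kappa)$ with $\rho,\kappa$ independent---is cleaner: the orbit structure (four orbits indexed by the vanishing pattern of $(A,B)$) drops out immediately, and the identification $A=0\Leftrightarrow\beta=-i$, $B=0\Leftrightarrow\beta=i$ matches the paper's cases. Your separating invariants ($x^2\cdot u^2$, $(x^2)^2\in\FF u^2$, $x^2\cdot\ann^2(\cE)$) are more elementary than the paper's $\Phi$ but encode the same information; note that the paper's $\Phi(x)=(1+\beta^2)s$ vanishes exactly when $AB=0$, consistent with your picture. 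Either approach works; yours trades the construction of $\Phi$ for a short linear-algebra computation in null coordinates.
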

\begin{proof} 
 We have two possibilities, depending on $\ann^3(\cE)$, which is of type $[1,2,1]$ and hence isomorphic (Theorem \ref{teo:indecomposabledimfour}) to an algebra with one of these graphs
 \begin{equation}\label{eq:1}
  \vcenter{\hbox{%
 \begin{tikzpicture}[x=1.5pt, y=1.5pt]
 \tikzset{vertex/.style = {shape=circle,draw, inner sep=0pt, minimum size=3pt}}
 \tikzset{edge/.style = {->,> = latex'}}
 \node[vertex] (a) at  (-10,20) {};
 \node[vertex] (b) at  (-10,10) {};
 \node[vertex] (c) at  (10,10) {};
 \node[vertex] (d) at  (0,0) {};

 \draw[edge] (a) to (b);
 \draw[edge] (b) to (d);
 \draw[edge] (c) to (d);

 \end{tikzpicture}}}
  \quad\text{or}\quad
 \vcenter{\hbox{%
  \begin{tikzpicture}[x=1.5pt, y=1.5pt]

\tikzset{vertex/.style = {shape=circle,draw, inner sep=0pt, minimum size=3pt}}
\tikzset{edge/.style = {->,> = latex'}}
\node[vertex] (a) at  (0,20) {};
\node[vertex] (b) at  (-10,10) {};
\node[vertex] (c) at  (10,10) {};
\node[vertex] (d) at  (0,0) {};

\draw[edge] (a) to (b);
\draw[edge] (a) to node[near start,right]{$i$} (c);
\draw[edge] (b) to (d);
\draw[edge] (c) to (d);

\end{tikzpicture}}}
\quad\text{.}
 \end{equation}
 Assume  first that $\ann^3(\cE)$ is of the first class in (\ref{eq:1}). Then there is a natural basis  $\{x,y,u,v,s\}$ with graph
\[
\begin{tikzpicture}[x=1.5pt, y=1.5pt]
 \tikzset{vertex/.style = {shape=circle,draw, inner sep=0pt, minimum size=3pt}}
 \tikzset{edge/.style = {->,> = latex'}}
  \node[vertex] (a) at  (-10,30) {};
  \node at (-8,31){\tiny$x$};
 \node[vertex] (b) at  (-10,20) {};
   \node at (-8,19){\tiny$y$};
  \node[vertex] (c) at  (-10,10) {};
   \node at (-8,11){\tiny$u$};
 \node[vertex] (d) at  (10,10) {};
    \node at (12,11){\tiny$v$};
 \node[vertex] (e) at  (0,0) {};
    \node at (2,-1){\tiny$s$};

 \draw[edge] (a) to (b);
 \draw[edge,dashed] (a) to node[right=-2pt]{\tiny $\beta$} (d);
 \draw[edge,dashed] (a) to[bend right=40] node[left=-2pt]{\tiny$\alpha$} (c);
 \draw[edge] (b) to (c);
 \draw[edge] (c) to (e);
 \draw[edge] (d) to (e);
 \end{tikzpicture}\quad\raisebox{20pt}{\text{.}}
 \]
If  $\{x',y',u',v',s'\}$ is another natural basis with the same graph but with parameters 
 $\alpha', \beta'$, then (see Corollary  \ref{cor:basechange}) 
 there are scalars $\epsilon_1, \epsilon_2, \nu_1, \nu_2 \in \FF, \epsilon_1 \neq 0 \neq \epsilon_2$, 
 such that $x' = \epsilon_1 x + \nu_1 s$, $y' = \epsilon_2 y
 + \nu_2 s$, and then $u' = (y')^2 = \epsilon_2^2 u$. 
 Also $u'v' = 0$, so $uv' = 0$ and hence $v' = \epsilon_3 v + \nu_3 s$,
 with $\epsilon_3, \nu_3 \in \FF$, $\epsilon_3 \neq 0$. 
 
 Now $s'=(u')^2=(v')^2$, so $\epsilon_2^4=\epsilon_3^2$ and $\epsilon_3=\pm\epsilon_2^2$.
 
 From $(x')^2=y'+\alpha'u'+\beta'v'$ we get $\epsilon_2=\epsilon_1^2$, $\epsilon_1^2\alpha=\epsilon_2^2\alpha'$,
 $\epsilon_1^2\beta=\epsilon_3\beta'$, and $\nu_2+\beta'\nu_3=0$. We conclude that $\alpha=\epsilon_2\alpha'$ and 
 $\beta=\pm \epsilon_2\beta'$.
 Therefore, the parameters $(\alpha, \beta)$ can be taken to be $(0,0)$, $(0,1)$, or $(1,\beta)$ with $\beta \in\FF$, and the algebras with parameters
 $(1,\beta)$ and $(1,\beta')$ are  isomorphic if and only if 
$\beta' \in\{\beta,-\beta\}$. 
 
 Now assume that  $\ann^3(\cE)$ is of the second class in \eqref{eq:1}. Then there is a natural basis  $\{x,y,u,v,s\}$ of $\cE$ with graph
   \[
    \begin{tikzpicture}[x=1.5pt, y=1.5pt]
\tikzset{vertex/.style = {shape=circle,draw, inner sep=0pt, minimum size=3pt}}
\tikzset{edge/.style = {->,> = latex'}}
\node[vertex] (a) at  (0,40) {};
\node[vertex] (b) at  (0,20) {};
\node[vertex] (c) at  (-10,10) {};
\node[vertex] (d) at  (10,10) {};
\node[vertex] (e) at  (0,0) {};

\draw[edge] (a) to (b);
 \draw[edge,dashed] (a) to [bend left =40]node[ near start, right=-2pt]{\tiny $\beta$} (d);
 \draw[edge,dashed] (a) to [bend right =40] node[ near start, left=-2pt]{\tiny $\alpha$} (c);
\draw[edge] (b) to node[near start, right]{\tiny $i$}(d);
\draw[edge] (b) to (c);
\draw[edge] (c) to (e);
\draw[edge] (d) to (e);
\end{tikzpicture}
\quad\raisebox{20pt}{\text{.}}
   \]
   That is, $x^2 = y +\alpha u+ \beta v$, $y^2  = u + iv$, $u^2 = v^2 = s$.
   
   The case $\alpha =\beta = 0$ happens if and only if  $(\cU_4\oplus\cU_1)^2 \subseteq\cU_3\oplus\cU_1. $
   
   If $\alpha \neq 0$ we may take the  new natural basis 
   $\{\sqrt{\alpha} x, \alpha y,  \alpha^2 u, \alpha^2 v, \alpha^4 s \}$ and hence assume that $\alpha = 1$. 
Similarly, if $\beta \neq 0$,
    we may take the  new natural basis $\{\sqrt{\beta} x, \beta y,  \beta^2 iv, - \beta^2 iu, -\beta^4 s \}$ and again assume that $\alpha = 1$.
    
    Therefore, either $\alpha =\beta = 0$ or there is a natural basis $\{x,y,u,v,s\}$
    with graph
    \[
      \begin{tikzpicture}[x=1.5pt, y=1.5pt]
\tikzset{vertex/.style = {shape=circle,draw, inner sep=0pt, minimum size=3pt}}
\tikzset{edge/.style = {->,> = latex'}}
\node[vertex] (a) at  (0,40) {};
\node[vertex] (b) at  (0,20) {};
\node[vertex] (c) at  (-10,10) {};
\node[vertex] (d) at  (10,10) {};
\node[vertex] (e) at  (0,0) {};

\draw[edge] (a) to (b);
\draw[edge] (a) to (c);
 \draw[edge,dashed] (a) to [bend left =40]node[ near start, right=-2pt]{\tiny $\beta$} (d);
\draw[edge] (b) to node[near start,right]{\tiny $i$} (d);
\draw[edge] (b) to (c);
\draw[edge] (c) to (e);
\draw[edge] (d) to (e);
\end{tikzpicture}
\quad\raisebox{20pt}{\text{.}}
\]
The natural linear  map 
$\ann^2(\cE) \hookrightarrow \ann^3(\cE) \longrightarrow \ann^3(\cE) / (\cU_3 \oplus \cU_1)$ induces
a linear isomorphism $\phi: \ann^2(\cE)  / \ann^1(\cE)  \longrightarrow \ann^3(\cE) / (\cU_3 \oplus \cU_1)$. 

Consider too the map $ \varphi: \cU_4 \oplus \cU_1  \longrightarrow \ann^3(\cE)/ (\cU_3 \oplus \cU_1)$,  
$z \mapsto z^2 +(\cU_3 \oplus \cU_1)$,
and the map $\psi: \ann^2(\cE)  / \ann^1(\cE)  \longrightarrow \ann^1(\cE)$, $z + \ann^1(\cE) \mapsto z^2$. 
The composition 
$\Phi = \psi \circ \phi^{-1} \circ \varphi $ is given by
$
\Phi: \cU_4 \oplus \cU_1 \longrightarrow \cU_1$, $\epsilon x + \delta s \mapsto
(\epsilon^2(u + \beta v))^2 = \epsilon^4(1 +\beta^2)s$.

Thus $\Phi  \neq 0$ if and only if $\beta = \pm i$. Also $\cE^2 \cap (\cU_3 \oplus \cU_1) $ has dimension $2$ if and only if $ \beta = i$. This 
shows that the algebras with parameter $\beta $ equal to $i$ and $-i$ are not isomorphic, and they are not isomorphic to any algebra with parameter
$\beta \neq \pm i$.

Finally, if $\beta \neq \pm i, $ take $ \mu = (1 +i\beta )(\sqrt{1 +\beta^2})^{-1}$, then the new natural  basis $\{\sqrt{\mu^{-1}} x, \mu^{-1} y, \mu^{-1}(u + \beta v),
\mu^{-1}(-\beta u + v), \mu^{-2}(1 +\beta^2) s\}$ has the above graph with $\beta = 0$.
\end{proof}
  
  \medskip
  
  The proof of the classification for types $[1,1,2,1]$ and $[1,1,1,1,1]$ follow similar arguments and will be omitted.
  
   \begin{theorem}
   Let $\cE$ be a  nilpotent evolution algebra of type $[1,1,2,1]$, over an algebraically closed field of 
   characteristic not two. Then $\cE$ is isomorphic to an algebra with one of
   the following graphs
\[
 \vcenter{\hbox{%
  \begin{tikzpicture}[x=1.5pt, y=1.5pt]
 \tikzset{vertex/.style = {shape=circle,draw, inner sep=0pt, minimum size=3pt}}
 \tikzset{edge/.style = {->,> = latex'}}
 \node[vertex] (a) at  (-10,20) {};
  \node[vertex] (b) at  (-10,10) {};
 \node[vertex] (c) at  (10,10) {};
 \node[vertex] (d) at  (0,0) {};
 \node[vertex] (e) at  (0,-10) {};
 \draw[edge] (a) to (b);
 \draw[edge] (b) to (d);
 \draw[edge] (c) to (d);
 \draw[edge] (d) to (e);
 \end{tikzpicture}}}
  \quad\text{,}\quad
 \vcenter{\hbox{%
 \begin{tikzpicture}[x=1.5pt, y=1.5pt]
 \tikzset{vertex/.style = {shape=circle,draw, inner sep=0pt, minimum size=3pt}}
 \tikzset{edge/.style = {->,> = latex'}}
 \node[vertex] (a) at  (-10,20) {};
  \node[vertex] (b) at  (-10,10) {};
 \node[vertex] (c) at  (10,10) {};
 \node[vertex] (d) at  (0,0) {};
 \node[vertex] (e) at  (0,-10) {};
 \draw[edge] (a) to (b);
  \draw[edge] (a) to (d);
 \draw[edge] (b) to (d);
 \draw[edge] (c) to (d);
 \draw[edge] (d) to (e);
 \end{tikzpicture}}}
  \quad\text{,}\quad
 \vcenter{\hbox{%
 \begin{tikzpicture}[x=1.5pt, y=1.5pt]

\tikzset{vertex/.style = {shape=circle,draw, inner sep=0pt, minimum size=3pt}}
\tikzset{edge/.style = {->,> = latex'}}
\node[vertex] (a) at  (0,20) {};
\node[vertex] (b) at  (-10,10) {};
\node[vertex] (c) at  (10,10) {};
\node[vertex] (d) at  (0,0) {};
\node[vertex] (e) at  (0,-10) {};

\draw[edge] (a) to (b);
\draw[edge] (a) to node[near start,right]{\tiny $i$} (c);
\draw[edge] (b) to (d);
\draw[edge] (c) to (d);
\draw[edge] (d) to (e);

\end{tikzpicture}}}
  \quad\text{,}\quad
 \vcenter{\hbox{%
 \begin{tikzpicture}[x=1.5pt, y=1.5pt]

\tikzset{vertex/.style = {shape=circle,draw, inner sep=0pt, minimum size=3pt}}
\tikzset{edge/.style = {->,> = latex'}}
\node[vertex] (a) at  (0,20) {};
\node[vertex] (b) at  (-10,10) {};
\node[vertex] (c) at  (10,10) {};
\node[vertex] (d) at  (0,0) {};
\node[vertex] (e) at  (0,-10) {};
\draw[edge] (a) to (b);
\draw[edge] (a) to (d);
\draw[edge] (a) to node[near start,right]{\tiny $i$} (c);
\draw[edge] (b) to (d);
\draw[edge] (c) to (d);
\draw[edge] (d) to (e);
\end{tikzpicture}}}
  \quad\text{,}\quad
 \vcenter{\hbox{%
 \begin{tikzpicture}[x=1.5pt, y=1.5pt]
 \tikzset{vertex/.style = {shape=circle,draw, inner sep=0pt, minimum size=3pt}}
 \tikzset{edge/.style = {->,> = latex'}}
 \node[vertex] (a) at  (-10,20) {};
  \node[vertex] (b) at  (-10,10) {};
 \node[vertex] (c) at  (10,10) {};
 \node[vertex] (d) at  (0,0) {};
 \node[vertex] (e) at  (0,-10) {};
 \draw[edge] (a) to (b);
 \draw[edge] (b) to (d);
 \draw[edge] (c) to (d);
 \draw[edge] (c) to (e);
 \draw[edge] (d) to (e);
 \draw[edge,dashed] (a) to[bend left=40] node [right =-2pt]{\tiny $\alpha$}(d);

 \end{tikzpicture}}}
  \quad\text{,}\quad
 \vcenter{\hbox{%
 \begin{tikzpicture}[x=1.5pt, y=1.5pt]
 \tikzset{vertex/.style = {shape=circle,draw, inner sep=0pt, minimum size=3pt}}
\tikzset{edge/.style = {->,> = latex'}}
\node[vertex] (a) at  (0,20) {};
\node[vertex] (b) at  (-10,10) {};
\node[vertex] (c) at  (10,10) {};
\node[vertex] (d) at  (0,0) {};
\node[vertex] (e) at  (0,-10) {};

\draw[edge] (a) to node [near start,left =-2pt]{\tiny $\beta$} (b);
\draw[edge,dashed] (a) to node[right=-3pt]{\tiny $\gamma$} (d);
\draw[edge] (a) to  (c);
\draw[edge] (b) to (d);
\draw[edge] (c) to (d);
\draw[edge] (d) to (e);
\draw[edge] (c) to (e);

\end{tikzpicture}}}
  \quad\text{.}\quad
\]
Algebras with different graphs are not isomorphic.  Two algebras with the fifth graph and parameters $\alpha, \alpha' $ are isomorphic if and only if
$\alpha' = \alpha $ or $\alpha' = -\alpha$.
 Two algebras with the sixth graph and parameters 
 $(\beta, \gamma) $ and $(\beta', \gamma') $ are isomorphic if and only if $\beta' \in \{\beta, -\beta  \} $ and $\gamma' \in \{\gamma, -\gamma  \}, $
 or $\beta \beta' \in\{1, -1\}$ and  $\gamma'\in \{i\beta^{-1}\gamma, -i\beta^{-1}\gamma  \}$.

 \end{theorem}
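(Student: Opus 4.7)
The plan is to mirror the proof of the previous theorem for type $[1,2,1,1]$: start from a natural basis $\{x,u,v,t,s\}$ of $\cE$ adapted to the annihilator series (so $\cU_1=\FF s$, $\cU_2=\FF t$, $\cU_3=\FF u\oplus\FF v$, $\cU_4=\FF x$), and split the analysis according to the structure of the four-dimensional ideal $\ann^3(\cE)$. Since $\ann(\ann^3(\cE))$ has dimension one, any decomposition of $\ann^3(\cE)$ would force a nilpotent summand with zero annihilator, which is impossible; hence $\ann^3(\cE)$ is indecomposable and, by Theorem \ref{teo:indecomposabledimfour}, admits a natural basis either of \emph{Form I} (where $(\cU_3\oplus\cU_1)^2$ has dimension one, so after rescaling $u^2=v^2=t$, $t^2=s$) or of \emph{Form II} (where $(\cU_3\oplus\cU_1)^2$ has dimension two, so after rescaling $u^2=t$, $v^2=t+s$, $t^2=s$).

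Writing $x^2=\lambda u+\mu v+\nu t+\sigma s$ with $(\lambda,\mu)\neq(0,0)$, the next step is to exhaust the natural-basis normalizations controlled by Corollary \ref{cor:basechange}. The admissible shifts $u\mapsto u+\delta s$ and $v\mapsto v+\epsilon s$ always absorb the $\sigma$-term. In Form I, the remaining action on $(\lambda,\mu)$ is that of the orthogonal group of the quadratic form $\alpha^2+\beta^2$ on $\FF^2$, which over an algebraically closed field produces two orbits on $(\lambda,\mu)\neq(0,0)$: a non-isotropic one, normalizable to $(\lambda,\mu)=(1,0)$, after which the compatible rescaling $x\mapsto\kappa x$, $u,v\mapsto\kappa^2 u,\kappa^2 v$, $t\mapsto\kappa^4 t$, $s\mapsto\kappa^8 s$ forces the residual $t$-coefficient to $0$ or $1$ (giving the first and second graphs), and an isotropic one, normalizable to $(\lambda,\mu)=(1,i)$, which by the same dichotomy produces the third and fourth graphs. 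In Form II the compatibility of the rescaling with the relation $v^2=t+s$ forces $\kappa^4=1$: the case $\mu=0$ gives the fifth graph with the sole parameter $\alpha=\nu$ defined up to sign, and the case $\mu\neq 0$, $\lambda\neq 0$ gives the sixth graph with parameters $(\beta,\gamma)=(\lambda/\mu,\nu/\mu)$. The apparently exceptional subcase $\mu\neq 0$, $\lambda=0$ collapses back into the fifth graph via the nontrivial automorphism $\psi$ of $\ann^3(\cE)$ in Form II defined by $u\mapsto -iv$, $v\mapsto iu$, $t\mapsto -(t+s)$, $s\mapsto s$, under which $v+\nu t$ becomes $\tilde u-i\nu\tilde t$ in the new natural basis.

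For the isomorphism classes I follow the pattern of the proof for type $[1,2,1,1]$: parametrize a hypothetical isomorphism using Corollary \ref{cor:basechange} and solve the constraints imposed by the defining relations. The trivial sign scalings give $\alpha\sim-\alpha$ for the fifth graph and $(\beta,\gamma)\sim(\pm\beta,\pm\gamma)$ for the sixth graph; applying the automorphism $\psi$ above to the sixth graph and renormalizing the $v$-coefficient to one yields the further identification $(\beta,\gamma)\sim(\pm\beta^{-1},\pm i\beta^{-1}\gamma)$. Nonequivalence across different graphs, and beyond these parameter orbits, is detected by standard invariants such as $\dim\cE^2$, the behaviour of the principal powers $x^{[2]},x^{[3]},x^{[4]}$, and the structural test whether $(x^{[2]})^2$ vanishes, lies in $\ann(\cE)\setminus\{0\}$, or lies in $\ann^2(\cE)\setminus\ann(\cE)$. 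The main obstacle is the identification and exploitation of the hidden automorphism $\psi$ of $\ann^3(\cE)$ in Form II; without it the $\lambda=0$, $\mu\neq 0$ subcase would appear as a spurious new graph, and the second family of parameter equivalences in the sixth graph would be missed.
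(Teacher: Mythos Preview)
Your approach is correct and matches the paper's intended argument: the paper explicitly omits this proof, stating only that it ``follows similar arguments'' to the type $[1,2,1,1]$ case, and your sketch does exactly that---split according to the isomorphism class of the four-dimensional ideal $\ann^3(\cE)$ (of type $[1,1,2]$), then normalize using Corollary \ref{cor:basechange}. Your explicit identification of the involutive automorphism $\psi$ of $\ann^3(\cE)$ in Form~II is precisely the extra ingredient the omitted proof would require, both to collapse the $\lambda=0,\mu\neq 0$ subcase into the fifth graph and to produce the second family of equivalences $(\beta,\gamma)\sim(\pm\beta^{-1},\pm i\beta^{-1}\gamma)$ for the sixth graph; the computation $\psi^2=\id$ also confirms that $\psi$ introduces no further identifications within the fifth graph beyond $\alpha\sim -\alpha$.
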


   \begin{theorem}
   Let $\cE$ be a  nilpotent evolution algebra of type $[1,1,1,1,1]$, over an algebraically closed field of 
   characteristic not two. Then $\cE$ is isomorphic to an algebra with one of
   the following graphs
\[
\vcenter{\hbox{%
\begin{tikzpicture}[x=1.5pt, y=1.5pt]

\tikzset{vertex/.style = {shape=circle,draw, inner sep=0pt, minimum size=3pt}}
\tikzset{edge/.style = {->,> = latex'}}
\node[vertex] (a) at  (0,40) {};
\node[vertex] (b) at  (0,30) {};
\node[vertex] (c) at  (0,20) {};
\node[vertex] (d) at  (0,10) {};
\node[vertex] (e) at  (0,0) {};
\draw[edge] (a) to (b);
\draw[edge] (b) to (c);
\draw[edge] (c) to (d);
\draw[edge] (d) to (e);
\end{tikzpicture}}}
  \quad\text{,}\quad
 \vcenter{\hbox{%
 \begin{tikzpicture}[x=1.5pt, y=1.5pt]

\tikzset{vertex/.style = {shape=circle,draw, inner sep=0pt, minimum size=3pt}}
\tikzset{edge/.style = {->,> = latex'}}
\node[vertex] (a) at  (0,40) {};
\node[vertex] (b) at  (0,30) {};
\node[vertex] (c) at  (0,20) {};
\node[vertex] (d) at  (0,10) {};
\node[vertex] (e) at  (0,0) {};
\draw[edge] (a) to (b);
\draw[edge] (b) to (c);
\draw[edge] (c) to (d);
\draw[edge] (d) to (e);
\draw[edge] (a) to[bend left=40] (d);
\end{tikzpicture}}}
  \quad\text{,}\quad
 \vcenter{\hbox{
 \begin{tikzpicture}[x=1.5pt, y=1.5pt]

\tikzset{vertex/.style = {shape=circle,draw, inner sep=0pt, minimum size=3pt}}
\tikzset{edge/.style = {->,> = latex'}}
\node[vertex] (a) at  (0,40) {};
\node[vertex] (b) at  (0,30) {};
\node[vertex] (c) at  (0,20) {};
\node[vertex] (d) at  (0,10) {};
\node[vertex] (e) at  (0,0) {};
\draw[edge] (a) to (b);
\draw[edge] (b) to (c);
\draw[edge] (c) to (d);
\draw[edge] (d) to (e);
\draw[edge,dashed] (a) to[bend left=40] node [right=-2pt]{\tiny $\alpha$}(d);
\draw[edge] (a) to[bend right=40] (c);

\end{tikzpicture}}}
  \quad\text{,}\quad
 \vcenter{\hbox{%
 \begin{tikzpicture}[x=1.5pt, y=1.5pt]

\tikzset{vertex/.style = {shape=circle,draw, inner sep=0pt, minimum size=3pt}}
\tikzset{edge/.style = {->,> = latex'}}
\node[vertex] (a) at  (0,40) {};
\node[vertex] (b) at  (0,30) {};
\node[vertex] (c) at  (0,20) {};
\node[vertex] (d) at  (0,10) {};
\node[vertex] (e) at  (0,0) {};
\draw[edge] (a) to (b);
\draw[edge] (b) to (c);
\draw[edge] (c) to (d);
\draw[edge] (d) to (e);
\draw[edge,dashed] (a) to[bend left=60] node [right=-2pt]{\tiny $\beta$}(d);
\draw[edge,dashed] (a) to[bend right=40] node [left=-2pt]{\tiny $\alpha$}(c);
\draw[edge] (b) to[bend left=40] (d);
\end{tikzpicture}}}
 \]
Algebras with different graphs are not isomorphic.  Two algebras with the third graph and different values of the 
 parameter are not isomorphic.   Two algebras with the fourth graph and parameters $(\alpha,\beta)$ and 
 $(\alpha', \beta')$ are isomorphic if and only if 
 $(\alpha', \beta') = (\alpha,\beta)$ or $(\alpha', \beta') = (-\alpha,-\beta)$.

 \end{theorem}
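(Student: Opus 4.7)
My plan is to follow the same normalization scheme used for type $[1,2,1,1]$: pick a natural basis adapted to the upper annihilating series, iteratively normalize the structure constants using the restricted base-change freedom of Corollary~\ref{cor:basechange}, and then classify the orbits of the surviving parameters.

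I would start from a natural basis $\{s,u,z,y,x\}$ with $s\in\cU_1$, $u\in\cU_2$, $z\in\cU_3$, $y\in\cU_4$, $x\in\cU_5$. Since every $\cU_i$ is one-dimensional, the squares have the form
\begin{align*}
u^2&=\lambda_1 s, & z^2&=\lambda_2 u+\mu_2 s,\\
y^2&=\lambda_3 z+\mu_3 u+\nu_3 s, & x^2&=\lambda_4 y+\mu_4 z+\nu_4 u+\xi_4 s,
\end{align*}
with $\lambda_1,\lambda_2,\lambda_3,\lambda_4\ne 0$. The allowed base changes consist only of rescalings of each basis element and shifts of $u,z,y,x$ by multiples of $s$. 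Iteratively rescaling $s,u,z,y,x$ to force the $\lambda_i$ to $1$ and shifting $u$, then $z$, then $y$ by appropriate multiples of $s$ to kill $\mu_2$, $\nu_3$, and $\xi_4$, I arrive at the canonical form
\[
u^2=s,\quad z^2=u,\quad y^2=z+\mu u,\quad x^2=y+\alpha z+\beta u,\qquad (\mu,\alpha,\beta)\in\FF^3.
\]

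A direct computation shows that the base changes preserving this canonical shape form a one-parameter family $c\in\FF^*$ (with $s\mapsto c^4s$, $u\mapsto c^2u$, $z\mapsto cz$, $y\mapsto dy$ for some $d^2=c$, and $x\mapsto ex$ for some $e^2=d$), acting on the parameters as
\[
(\mu,\alpha,\beta)\ \longmapsto\ \bigl(\mu/c,\ d\alpha/c,\ d\beta/c^2\bigr),
\]
with the sign choice $d=\pm\sqrt{c}$ inducing a correlated sign flip on $(\alpha,\beta)$. Over an algebraically closed field the four orbits are easy to enumerate: $(\mu,\alpha,\beta)=(0,0,0)$ gives Graph~1; $\mu=\alpha=0$, $\beta\ne 0$, normalized to $\beta=1$, gives Graph~2; $\mu=0$ with $\alpha\ne 0$, normalized to $\alpha=1$ with invariant $\beta_{\mathrm{old}}/\alpha_{\mathrm{old}}^3$, gives Graph~3 (both sign choices of $d$ produce the same normalized $\beta$); finally $\mu\ne 0$ is normalized to $\mu=1$, forcing $c=1$ and leaving only the residual $\mathbb Z_2$ identification $(\alpha,\beta)\sim(-\alpha,-\beta)$, giving Graph~4. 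The four classes are intrinsically distinguished by the basis-free conditions $(\cU_4\oplus\cU_1)^2\subseteq\cU_3\oplus\cU_1$ (fails exactly for Graph~4), $(\cU_5\oplus\cU_1)^2\subseteq\cU_4\oplus\cU_1$ (holds only for Graph~1), and $(\cU_5\oplus\cU_1)^2\subseteq\cU_4\oplus\cU_2\oplus\cU_1$ (separates Graphs~2 and~3 within $\mu=0$).

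The main difficulty is the careful bookkeeping that verifies each shift used to kill an $s$-component remains compatible with the subsequent rescalings of the $\lambda_i$, together with tracking the $\mathbb Z_2$-sign ambiguities in the residual scaling. The essential rigidity is that Corollary~\ref{cor:basechange} forbids shifts of $z,y,x$ by multiples of $u$ or $z$: without this constraint, the $u$-component $\mu$ of $y^2$ could always be killed, collapsing Graph~4 into Graphs~1--3 and destroying the invariant that distinguishes them.
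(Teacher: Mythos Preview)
Your argument is correct and follows exactly the strategy the paper indicates (the paper omits this proof, noting only that it ``follows similar arguments'' to the $[1,2,1,1]$ case): normalize via the restricted base changes of Corollary~\ref{cor:basechange}, compute the residual one-parameter action $(\mu,\alpha,\beta)\mapsto(\mu/d^2,\alpha/d,\beta/d^3)$, and read off the orbits. One harmless imprecision: the condition $(\cU_5\oplus\cU_1)^2\subseteq\cU_4\oplus\cU_1$ also holds for Graph~4 when $(\alpha,\beta)=(0,0)$, so it does not by itself single out Graph~1; but since you first separate Graph~4 via $(\cU_4\oplus\cU_1)^2\not\subseteq\cU_3\oplus\cU_1$, the three invariants taken in sequence do distinguish the four families as claimed.
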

 
 \medskip
 The results in this section show that the classification in \cite{HA2} misses most of the possibilities.


\bigskip

\section*{Acknowledgments}

The authors are indebted to Professors Hegazi and Abdelwahab for several helpful comments.
 
   \bigskip

\end{document}